\newtheorem{theorem}{Theorem}
\newtheorem{proof}{Proof}
\newtheorem{lemma}{Lemma}
\numberwithin{equation}{section}
\journal{.....}
\begin{document}

\begin{frontmatter}
\title{Stability analysis and optimal control of an HIV/AIDS epidemic model}
\author[a]{Nouar Chorfi}
\author[b]{Salem Abdelmalek}
\author[c]{Samir Bendoukha}
\address[a]{Laboratory of Mathematics, Informatics and Systems (LAMIS), Larbi Tebessi University - Tebessa, Tebessa, Algeria. E-mail address: nouar.chorfi @ univ-tebessa.dz}
\address[b]{Department of Mathematics and Computer Larbi Tebessi University Tebessa, Algeria. E-mail address: salem.abdelmalek@univ-tebessa.dz}
\address[c]{Electrical Engineering Department, College of Engineering at Yanbu, Taibah University, Saudi Arabia. E-mail address: sbendoukha@taibahu.edu.sa}

\begin{abstract}
In this article, we consider an HIV/AIDS epidemic model with four classes of individuals. We have discussed about basic properties of the system and found the basic reproduction number $R_0$ of the system. The stability
analysis of the model shows that the system is locally as well as globally asymptotically stable at disease-free equilibrium $E_{0}$ when $R_0<1$. When $R_0>1$ endemic equilibrium $E_1$ exists and the system becomes locally asymptotically stable at $E_1$. An optimal controller is presented that considers the use of three different measures to combat the spread of HIV/AIDS, namely: the use of condoms, screening of unaware infective individuals, and treatment of the HIV infected population. The objective of the optimal controller is to minimize the size of the susceptible and infected populations. Our investigation of the controlled system starts with establishing the existence of the optimal control, followed by identifying the necessary conditions of optimality.\end{abstract}
\begin{keyword}
Dynamical systems; stability analysis; optimal control; HIV/AIDS.
\end{keyword}
\end{frontmatter}

\section{Introduction}
Mathematical models play an important role in various branches of applied science \cite{L.Edelstein2005,  Ingalls2013,A.Eladdad2014}. These models help scientists and researchers gain a better understanding of the behavior of countless real world systems such as the spread of infectious diseases in biology. The human immunodeficiency virus (HIV), which leads to the acquired immunodeficiency syndrome (AIDS), is a very dangerous disease that is fatal if left untreated and uncontrolled. Over 35 million people have died from AIDS-related illnesses since the start of the pandemic in 1981.

Mathematical models over the years have been useful for understanding the dynamics of HIV transmission and the related epidemiological control patterns. They allow for the short and long-term prediction of the incidence of HIV and AIDS diseases. The earliest known HIV transmission model was proposed by May and Anderson in \cite{R.Anderson1986,M.Anderson1998}. This simple model helped the authors clarify the effects of various factors on the overall pattern of the AIDS epidemic. Since then, several models have been proposed in the literature and studied theoretically \cite{S.S.Rao2003,ripathi2007,Mukandavire2009,R.Safie2012,
K.O.Okosun2013,M.Marsudi2014,M.Ostadzad2015,M.Ostadzad2015,
M.Pitchaimani2015,M.S.Zahedi2017}. In \cite{S.S.Rao2003}, the author presented a framework for the transmission of HIV/AIDS in India. Naresh and Tripathi \cite{ripathi2007} proposed a nonlinear mathematical model to study the effect of screening unaware infectives on the spread of HIV. The following investigations carried out in \cite{ripathi2007,R.Safie2012} pointed out that the screening of infective individuals has a substantial effect on the spread of AIDS. In addition, Ostadzad \textit{et al.}\cite{M.Ostadzad2015} investigated the influence of public health education on the transmission of HIV/AIDS. Another model was proposed in \cite{K.O.Okosun2013} and its the global stability of its equilibrium points was investigated in \cite{M.S.Zahedi2017}.

Various control strategies have been proposed over the years in relation to HIV/AIDS models. Recently, the theory of optimal control has proven to be an effective tool in disease control as it sheds more light on the dynamics of diseases and provides appropriate preventive and control measures. One of the main measures in combating infectious diseases is vaccination. An optimal vaccination strategy was presented in \cite{H.R.Josh2006}, minimizing two important parameters, the number of infected individuals and the cost of the vaccination process.
Following the same reasoning, Okosun et al.\cite{K. O. Okosun2011} derived and analyzed a malaria disease transmission mathematical model that includes treatment and vaccination with waning immunity.The study used optimal control to quantify the impact of a possible vaccination with different treatment strategies on controlling the spread of malaria. Optimal control was again used in \cite{X.Yan2007} to study the outbreak of SARS using Pontryagin’s maximum principle along with a genetic algorithm.Other research studies that also investigated the optimal control of epidemic models include \cite{ A.Mojaver2016, G.G.Mwanga2014, S.Cho2015,
J.Karrakchou2006, H.W.Berhe2020, W.Berhe2020}.

The aforementioned work carried out by Okosun \textit{et al.} in \cite{K.O.Okosun2013} considered the screening of unaware infective individuals and the HIV/AIDS treatment and their effect on the spread of the disease. As a result, an optimal control approach was established.

In this paper, we consider a system that was proposed in \cite{K.O.Okosun2013} and whose diagram is shown in Figure \ref{Fig_Model}. We start by explaining the system model, identifying the main system characteristics, and listing the required conditions to be imposed on various parameters. Then, we identify the system's equilibria and investigate their local and global asymptotic stabilities. Once the dynamics of the uncontrolled system are understood, we introduce the three control measures into the model and prove the existence of the optimal control as well as establish necessary conditions for the optimality. Numerical simulations are carried out and results are presented here to validate the theoretical analysis presented throughout the paper.

\begin{figure}[h]
\centering
\includegraphics[width=0.8\textwidth]{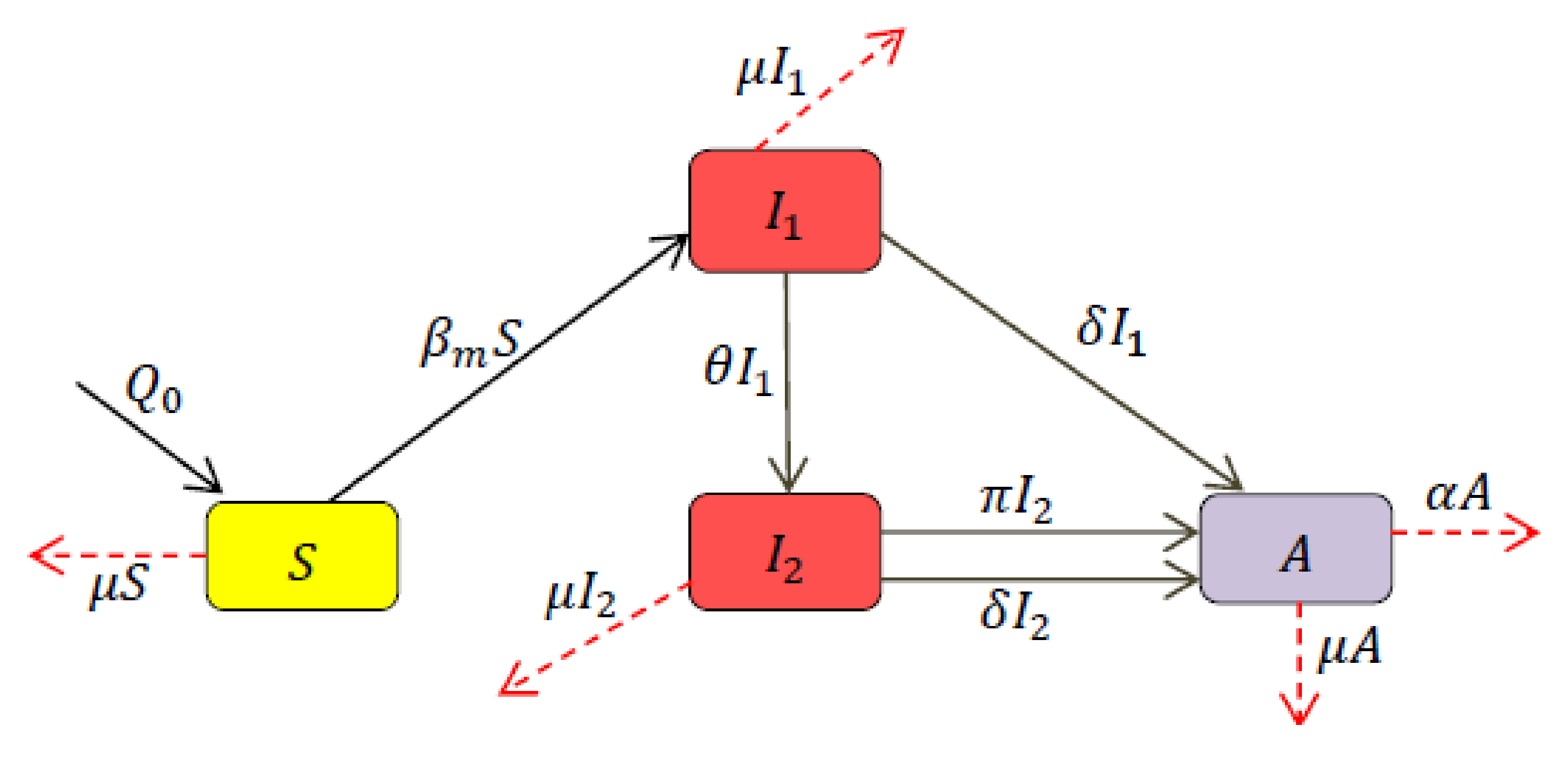}
\caption{Flow diagram of the HIV/AIDS disease transmission system under
study.}
\label{Fig_Model}
\end{figure}

\section{Mathematical model}

In this section, we consider the fourth-order model proposed in \cite{K.O.Okosun2013}. The model assumes a total population of size $N(t)$ at time ${\large t%
}$ divided into four sub-populations: susceptible individuals $S(t)$,
infective individuals who are not aware of their infection $I_{1}(t)$, HIV
positive individuals who are aware of their infection $I_{2}(t)$, and
individuals with AIDS $A(t)$. Hence, by design,%
\begin{equation}
N(t)=S(t)+I_{1}(t)+I_{2}(t)+A(t).  \label{2.01}
\end{equation}%
The model dynamics are described by the ODE system:%
\begin{equation}
\left\{
\begin{array}{l}
\dfrac{dS(t)}{dt}\text{ }{\large =}\text{ }Q_{0}-\beta _{m}S(t){\large -\mu }%
S(t),\medskip \\
\dfrac{dI_{1}(t)}{dt}\text{ }{\large =}\text{ }\beta _{m}S(t)-\left( \theta
+\mu +\delta \right) I_{1}(t),\medskip \\
\dfrac{dI_{2}(t)}{dt}\text{ }{\large =}\text{ }\theta I_{1}(t)-\left( \delta
+\mu +\pi \right) I_{2}(t),\medskip \\
\dfrac{dA(t)}{dt}\text{ }{\large =}\text{ }\delta I_{1}(t)+\delta
I_{2}(t)+\pi I_{2}(t)-\left( \alpha +\mu \right) A(t),%
\end{array}%
\right.  \label{2.02}
\end{equation}%
where
\begin{equation}
\beta _{m}=\dfrac{\mathbf{(}1-u_{1}\mathbf{)(}\beta _{1}c_{1}I_{1}\mathbf{+}%
\beta _{2}c_{2}I_{2}\mathbf{+}\beta _{3}c_{3}A\mathbf{)}}{N}.  \label{2.0}
\end{equation}%
The system is equipped with initial conditions
\begin{equation}
S(0)=S_{0},\ I_{1}(0)=I_{0}{}_{1},\ I_{2}(0)=I_{0}{}_{2},\ A(0)=A_{0}.
\end{equation}%
The terms $c_{i}${\large \ }$(i=1,2,3)${\large \ }denote the number of
sexual partners of susceptible individuals with unaware infectives, aware
infectives, and AIDS individuals, respectively, in each time period. Also, $%
\beta _{i}${\LARGE \ }$(i=1,2,3)$ represent the interaction probabilities
for susceptible individuals with unaware infectives, aware infectives, and
AIDS individuals, respectively.

Note that $\beta _{m}$ involves a control parameter $u_{1}\in \lbrack 0,1]$,
which represents the successful use of condoms by susceptible individuals as
a protection measure. The term $\theta $ measures the rate at which unaware
infectives are detected by a screening method to become aware infectives.
The term $\pi $ measures the progression rate at which the already-aware
infective individuals on treatment move to the $A$ class in each time
period. Here, $\delta $ is the rate by which both types of infectives not on
treatment develop AIDS\textbf{. }The parameter $\mu $
denotes the natural mortality rate unrelated to HIV/AIDS, whereas $\alpha $%
\textbf{\ }denotes the AIDS related death rate. It is assumed that the rate
of contact of susceptibles with AIDS individuals is much less than aware
infectives which in turn is much less than that with unaware infectives $%
\left( \beta _{3}\ll \beta _{2}\ll \beta _{1}\right) $. This is so because,
on becoming aware of their infection, the infected persons may choose to use
preventive measures and change their behavior and thus may contribute little
in spreading the infection. We assume also that the $A$ class is less
sexually active. Now, we describe that all solutions of the system with
non-negative initial data will remain non-negative for all time.

\section{Positivity and boundedness of solutions}

For the HIV/AIDS transmission system \hspace*{0in}(\ref{2.02}) to be
epidemiologically meaningful, it is important to prove that all solutions
with non-negative initial data will remain non-negative for all time.

\subsection{Positivity of solutions}

\begin{theorem}
If $S(0)\geq 0,\ I_{1}(0)\geq 0,\ I_{2}(0)\geq 0$ and $A(0)\geq 0$, the
solutions $S(t)$, $I_{1}(t)$, $I_{2}(t)$ and $A(t)$ of system (\ref{2.01})
are positive for all $t\geq 0$.
\end{theorem}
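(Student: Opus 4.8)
The plan is to prove that the closed non-negative orthant $\{S,I_{1},I_{2},A\ge 0\}$ is positively invariant for system (\ref{2.02}); this is exactly the content of the statement (``positive'' being understood in the standard non-negative sense, since the disease-free data $I_{1}(0)=I_{2}(0)=A(0)=0$ keep those classes identically zero). First I would note that, under the standing assumption that all parameters are non-negative and $u_{1}\in[0,1]$, the right-hand side of (\ref{2.02}) is continuously differentiable, hence locally Lipschitz, on the region $\{N>0\}$, so the standard existence--uniqueness theorem gives a unique solution on a maximal interval $[0,T_{\max})$ and positivity need only be verified there.

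The main line of argument is variation of parameters applied equation by equation. For the susceptible class this is unconditional: writing $\dot S=Q_{0}-(\beta_{m}+\mu)S$ and multiplying by the (always strictly positive) integrating factor $\exp\!\bigl(\int_{0}^{t}(\beta_{m}(s)+\mu)\,ds\bigr)$ expresses $S(t)$ as $S(0)$ times a positive exponential plus a $Q_{0}$-weighted integral of positive exponentials, so $S(t)\ge 0$ regardless of the sign of $\beta_{m}$. The same manipulation on the second equation yields
$$I_{1}(t)=I_{1}(0)\,e^{-(\theta+\mu+\delta)t}+\int_{0}^{t}\beta_{m}(s)\,S(s)\,e^{-(\theta+\mu+\delta)(t-s)}\,ds,$$
and wholly analogous representations hold for $I_{2}$ (with source $\theta I_{1}$) and for $A$ (with source $\delta I_{1}+(\delta+\pi)I_{2}$).

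The one genuine obstacle is that these integrands contain $\beta_{m}$, which is only guaranteed non-negative once $I_{1},I_{2},A\ge 0$ are already known (see its definition (\ref{2.0})); taken alone the formulas are therefore circular. I would close the loop with a first-exit-time argument. Set $t_{1}=\sup\{\,t\in[0,T_{\max}):S,I_{1},I_{2},A\ge 0\ \text{on}\ [0,t]\,\}$ and suppose $t_{1}<T_{\max}$. By continuity all four components are non-negative on $[0,t_{1}]$, hence $\beta_{m}\ge 0$ there, and the integral representations above make every component non-negative up to $t_{1}$. It remains to forbid a crossing at $t_{1}$, which follows from the tangent (quasi-positivity) condition: whenever one coordinate vanishes and the others are non-negative one checks $\dot S=Q_{0}\ge 0$, $\dot I_{1}=\beta_{m}S\ge 0$, $\dot I_{2}=\theta I_{1}\ge 0$, and $\dot A=\delta I_{1}+(\delta+\pi)I_{2}\ge 0$. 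Since the vector field is locally Lipschitz and satisfies this sub-tangentiality condition on the whole boundary, Nagumo's invariance theorem (which handles the higher-order behaviour that a one-point derivative sign cannot) rules out $t_{1}<T_{\max}$, so all four solutions remain non-negative on $[0,T_{\max})$, proving the claim.
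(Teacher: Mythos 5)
Your proposal is correct, and it is in fact more careful than the paper's own argument. The paper and you treat the $S$-equation identically: since $Q_{0}\ge 0$, the integrating factor $\exp\bigl(\mu t+\int_{0}^{t}\beta_{m}(s)\,ds\bigr)$ is strictly positive whatever the sign of $\beta_{m}$, so $S(t)\ge 0$ unconditionally. The divergence is in the remaining three equations. The paper simply asserts that ``following the same steps'' gives $I_{1}(t)\ge I_{1}(0)e^{-(\theta+\delta+\mu)t}$ and the analogues for $I_{2}$ and $A$; but that step silently requires the forcing term $\beta_{m}S$ to be non-negative, and by (\ref{2.0}) the sign of $\beta_{m}$ depends on the signs of $I_{1}$, $I_{2}$, $A$ --- precisely what is being proved. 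You identify this circularity explicitly and close it with a first-exit-time argument plus the quasi-positivity (sub-tangentiality) of the vector field on the boundary of the orthant, invoking Nagumo's invariance theorem; together with local Lipschitz continuity on $\{N>0\}$ this is a complete and standard repair. The trade-off is length versus rigor: the paper's componentwise Gronwall-type bounds are shorter and give explicit exponential lower bounds, but as written they beg the question for $I_{1}$, $I_{2}$, $A$; your version costs an extra paragraph and only yields non-negativity (which, as you correctly note, is all the statement can mean when some initial data vanish), but it is logically airtight. The only residual caveat, which you partially acknowledge, is the degenerate initial condition $N(0)=0$, where $\beta_{m}$ is undefined; excluding it, as you do by working on $\{N>0\}$, is the right move and is implicit in the paper as well.
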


\begin{proof}
It follows from the first equation of system (\ref{2.02}) that%
\begin{equation}
\frac{dS(t)}{dt}+\left( \beta _{m}+\mu \right) S(t)\geq 0.  \label{ineg}
\end{equation}%
Multiplying inequality (\ref{ineg}) by $\exp \left( \mu t+\int_{0}^{t}\beta
_{m}(s)ds\right) $ yields
\begin{equation}
\frac{dS(t)}{dt}\exp \left( \mu t+\int_{0}^{t}\beta _{m}(s)ds\right) +\exp
\left( \mu t+\int_{0}^{t}\beta _{m}(s)ds\right) \left( \beta _{m}+\mu
\right) S(t)\geq 0.
\end{equation}%
Hence,
\begin{equation}
\frac{\mathrm{d}}{\mathrm{d}t}\left[ S(t)\exp \left( \mu t+\int_{0}^{t}\beta
_{m}(s)ds\right) \right] \geq 0.  \label{ineg 1}
\end{equation}%
Integrating (\ref{ineg 1}) leads to
\begin{equation}
S(t)\geq S(0)\exp \left( -\mu t-\int_{0}^{t}\beta _{m}(s)ds\right) .
\label{ineg 1}
\end{equation}%
Therefore, solution $S(t)$ is positive.\newline
Following the same steps for the remaining equations of system (\ref{2.02})
yields
\begin{equation}
I_{1}(t)\geq I_{1}(0)\exp (-(\theta +\delta +\mu )t)\geq 0,
\end{equation}%
\begin{equation}
I_{2}(t)\geq I_{2}(0)\exp (-(\pi +\delta +\mu )t)\geq 0,
\end{equation}%
and
\begin{equation}
A(t)\geq A(0)\exp (-(\alpha +\mu )t)\geq 0.
\end{equation}%
We can see that all solutions are positive for all $t\geq 0$. This completes
the proof.
\end{proof}

\subsection{Invariant region}

\begin{theorem}
The feasible region $\Upsilon $ defined by
\begin{equation*}
\Upsilon =\left\{ \left( S,I_{1},I_{2},\text{ }A\right) \in
\mathbb{R}
_{+}^{4}:N=S+I_{1}+I_{2}+A\leq \frac{Q_{0}}{\mu },\right\}
\end{equation*}%
with non-negative initial conditions $S(0),I_{1}(0),I_{2}(0)$ and $A(0)$ is
positively invariant.
\end{theorem}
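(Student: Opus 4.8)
The plan is to split positive invariance of $\Upsilon$ into two independent requirements: non-negativity of each coordinate, which is already supplied by the positivity theorem proved above, and the upper bound $N \leq Q_0/\mu$, which is the genuinely new content here. Since $\Upsilon$ is cut out by the four non-negativity constraints together with the single inequality $N \leq Q_0/\mu$, it is enough to show that a trajectory starting inside $\Upsilon$ keeps all components non-negative (done) and never crosses the surface $N = Q_0/\mu$ outward.

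First I would derive a scalar differential relation for $N(t) = S + I_1 + I_2 + A$ by summing the four equations of system (\ref{2.02}). The coupling terms cancel in pairs: the incidence flux $\beta_m S$ leaving $S$ reappears in the $I_1$ equation, and each internal transfer term ($\theta I_1$, $\delta I_1$, $\delta I_2$, $\pi I_2$) enters the donor compartment with a minus sign and the recipient compartment with a plus sign. What survives is the inflow $Q_0$ and the loss terms, giving
\[
\frac{dN(t)}{dt} = Q_0 - \mu N(t) - \alpha A(t).
\]
Invoking the positivity theorem to note $A(t) \geq 0$, and since $\alpha \geq 0$, the last term is non-positive, so I obtain the differential inequality
\[
\frac{dN(t)}{dt} \leq Q_0 - \mu N(t).
\]

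The main step is then the standard integrating-factor (comparison) argument: multiplying by $e^{\mu t}$, recognizing the left-hand side as $\tfrac{d}{dt}\!\left(N(t)e^{\mu t}\right)$, and integrating from $0$ to $t$ yields
\[
N(t) \leq \frac{Q_0}{\mu} + \left( N(0) - \frac{Q_0}{\mu} \right) e^{-\mu t}.
\]
Hence whenever $N(0) \leq Q_0/\mu$, the exponential correction is non-positive and $N(t) \leq Q_0/\mu$ for all $t \geq 0$. Combined with the coordinatewise non-negativity from the positivity theorem, this confirms that trajectories starting in $\Upsilon$ remain in $\Upsilon$, so the region is positively invariant. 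I do not anticipate any serious obstacle; the only point demanding care is the sign bookkeeping in the summation, namely verifying that every internal transfer term cancels and that the surviving terms $-\mu N$ and $-\alpha A$ carry the correct non-positive sign, since it is precisely this that makes the differential inequality — and therefore the bound — valid.
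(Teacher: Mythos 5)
Your proof is correct and follows essentially the same route as the paper: sum the four equations, obtain the differential inequality $\frac{dN}{dt}\leq Q_{0}-\mu N$, and apply the standard integrating-factor comparison to bound $N(t)$. If anything, your conclusion is slightly sharper, since you observe explicitly that $N(0)\leq Q_{0}/\mu$ forces $N(t)\leq Q_{0}/\mu$ for all $t\geq 0$ (which is what positive invariance actually requires), whereas the paper only records the $\limsup$ bound as $t\to\infty$.
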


\begin{proof}
Let $\left( S\left( t\right) ,I_{1}\left( t\right) ,I_{2}\left( t\right)
,A\left( t\right) \right) $\ be any solution of (\ref{2.02}) with positive
initial conditions. Since%
\begin{equation*}
N(t)=S(t)+I_{1}(t)+I_{2}(t)+A(t),
\end{equation*}%
the time derivative of $N(t)$ along the solution of (\ref{2.02}) is%
\begin{equation*}
\left. \dfrac{dN(t)}{dt}=\dfrac{d{\large S(t)}}{dt}+\dfrac{d{\large I}_{1}%
{\large (t)}}{dt}+\dfrac{d{\large I}_{2}{\large (t)}}{dt}+\dfrac{d{\large %
A(t)}}{dt}.\right.
\end{equation*}%
This implies that%
\begin{equation}
\dfrac{dN(t)}{dt}\leq Q_{0}-\mu N(t).  \label{2.002}
\end{equation}%
The solution $N(t)$ of the differential equation (\ref{2.002}) has the
following property,%
\begin{equation*}
{\large N(t)\leq N}(0){\large e}^{-\mu t}+\dfrac{Q_{0}}{\mu }\left(
1-e^{-\mu t}\right) ,
\end{equation*}%
where ${\large N}\left( 0\right) $ represents the sum of the initial values
of the variables. As $t\rightarrow \infty $, we have%
\begin{equation}
\overline{\underset{t\rightarrow \infty }{\lim }}{\large N(t)}\underset{%
t\rightarrow \infty }{=\lim \sup }{\large N(t)\leq }\frac{Q_{0}}{\mu }.
\label{2.03}
\end{equation}%
It has been proven that all the solutions of (\ref{2.02}) which initiate in $%
\mathbb{R}
_{+}^{4}$ are confined to the region
\begin{equation}
\Upsilon {\large =}\left\{ \left( S,I_{1},I_{2},A\right) \in
\mathbb{R}
_{+}^{4}:S+I_{1}+I_{2}+A\leq \frac{Q_{0}}{\mu }\right\} .  \label{2.04}
\end{equation}%
Hence, the solutions are bounded in the interval $\left[ 0,\infty \right) .$
\end{proof}

\section{Existence of equilibria and the basic reproduction number $R_{0}$}

In this section, we will show that system (\ref{2.02}) has equilibrium
solutions and calculate the basic reproduction number $R_{0}$.

\begin{theorem}
Considering system (\ref{2.02}) and the basic reproduction number $R_{0}$:

(i) If $R_{0}\leq 1$, the system admits the single disease free equilibrium $%
E_{0}=\left( \dfrac{Q_{0}}{\mu },0,0,0\right) $.

(ii) If $R_{0}$ $>1$, the system admits two distinct equilibria: $E_{0}$ and
the positive endemic equilibrium $E_{1}=(S^{\ast },I_{1}^{\ast },I_{2}^{\ast
},A^{\ast })$.
\end{theorem}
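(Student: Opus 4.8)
The plan is to locate all equilibria by setting the right-hand sides of \eqref{2.02} to zero and then to split into the cases $I_1=0$ and $I_1>0$. To compress the algebra I would abbreviate the removal rates as $k_1=\theta+\mu+\delta$, $k_2=\delta+\mu+\pi$ and $k_3=\alpha+\mu$, so that the steady-state equations read
\begin{equation*}
Q_0=(\beta_m+\mu)S,\qquad \beta_m S=k_1 I_1,\qquad \theta I_1=k_2 I_2,\qquad \delta I_1+(\delta+\pi)I_2=k_3 A.
\end{equation*}
First I would dispose of the disease-free case: if $I_1=0$, the third and fourth equations force $I_2=0$ and $A=0$, whence $\beta_m=0$ and the first equation gives $S=Q_0/\mu$. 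This recovers $E_0=(Q_0/\mu,0,0,0)$ and shows it is the unique infection-free steady state. Since this step is unconditional, the real content of the theorem lies in the endemic branch.

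For the endemic equilibrium I would assume $I_1>0$ and exploit the triangular structure of the infected compartments. The third and fourth equations yield
\begin{equation*}
I_2=\frac{\theta}{k_2}I_1,\qquad A=\frac{\delta k_2+(\delta+\pi)\theta}{k_2 k_3}I_1,
\end{equation*}
so each infected class is a fixed positive multiple of $I_1$. Writing $\Psi$ for the proportionality constant in $N=S+\Psi I_1$ and $\Phi$ for the constant in the force-of-infection numerator, $(1-u_1)(\beta_1 c_1 I_1+\beta_2 c_2 I_2+\beta_3 c_3 A)=\Phi I_1$, the relation $\beta_m=\Phi I_1/N$ turns the second equation $\beta_m S=k_1 I_1$ into $\Phi S=k_1 N$ after cancelling the nonzero factor $I_1$. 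Together with $\mu S+k_1 I_1=Q_0$ (obtained by combining the first two equations), this is a linear $2\times 2$ system that I would solve explicitly, obtaining
\begin{equation*}
S^{*}=\frac{\Psi Q_0}{\mu\Psi+\Phi-k_1},\qquad I_1^{*}=\frac{(\Phi-k_1)Q_0}{k_1(\mu\Psi+\Phi-k_1)},
\end{equation*}
and then recovering $I_2^{*}$ and $A^{*}$ by back-substitution.

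The decisive step is positivity. Since $\Psi,\Phi,k_i,Q_0>0$, the formulas above show that $S^{*},I_1^{*},I_2^{*},A^{*}$ are all strictly positive --- and that $S^{*}<Q_0/\mu$, so the point lies in $\Upsilon$ --- precisely when $\Phi>k_1$, i.e. $\Phi/k_1>1$. It then remains to identify this threshold quantity with $R_0$. Computing the next-generation matrix $FV^{-1}$ at $E_0$ (where $S/N=1$), with $F$ carrying the single nonzero infection row $(1-u_1)(\beta_1 c_1,\beta_2 c_2,\beta_3 c_3)$ and $V$ the lower-triangular transition matrix with diagonal $(k_1,k_2,k_3)$, I would read off the unique nonzero eigenvalue (equal to the trace, since $FV^{-1}$ has rank one) and verify that it equals $\Phi/k_1$. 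This establishes $R_0=\Phi/k_1$, so the positive endemic equilibrium exists if and only if $R_0>1$, while for $R_0\le 1$ only $E_0$ survives.

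I expect the main obstacle to be the nonlinear coupling introduced by the denominator $N$ in $\beta_m$, which a priori prevents a direct solution of the steady-state equations. The resolution is the observation that, at equilibrium, $I_2$ and $A$ are exact linear multiples of $I_1$, so both $N$ and the infection numerator are linear in $I_1$ for fixed $S$; this linearises the problem and is what makes the explicit solution and the clean threshold possible. A secondary care point is to check that the next-generation computation reproduces exactly the algebraic constant $\Phi/k_1$, so that the existence criterion and the independent definition of $R_0$ genuinely coincide.
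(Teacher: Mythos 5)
Your proposal is correct and follows essentially the same route as the paper: both solve the steady-state equations by expressing $I_{2}^{\ast}$, $A^{\ast}$ (and hence $N^{\ast}$ and the force of infection) as positive multiples of a single unknown, reduce existence of the endemic equilibrium to positivity of a linear expression, and identify the resulting threshold $\Phi/k_{1}$ with $R_{0}$ via the next-generation matrix of van den Driessche and Watmough. The only difference is cosmetic: the paper parametrizes by $\beta_{m}^{\ast}$ and obtains $B_{1}\beta_{m}^{\ast}+B_{0}=0$ with $B_{0}\propto(1-R_{0})$, whereas you parametrize by $I_{1}^{\ast}$; both yield the same dichotomy.
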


\begin{proof}
By setting $I_{1}=I_{2}=A=0$, we may simply find the disease free
equilibrium to be $E_{0}=\left( \dfrac{Q_{0}}{\mu },0,0,0\right) $.

Next, we study the existence a positive equilibrium of (\ref{2.02}). If one
exists, it is called an endemic equilibrium and is denoted by $%
E_{1}=(S^{\ast },I_{1}^{\ast },I_{2}^{\ast },A^{\ast })$. Substituting this
into (\ref{2.02}) yields:%
\begin{eqnarray}
Q_{0}-\beta _{m}^{\ast }S^{\ast }-\mu S^{\ast } &=&0,  \label{2.05} \\
&&  \notag \\
\beta _{m}^{\ast }S^{\ast }-\left( \theta +\mu +\delta \right) I_{1}^{\ast }
&=&0,  \label{2.06} \\
&&  \notag \\
\theta I_{1}^{\ast }-\left( \delta +\mu +\pi \right) I_{2}^{\ast } &=&0,
\label{2.07} \\
&&  \notag \\
\delta I_{1}^{\ast }+\delta I_{2}^{\ast }+\pi I_{2}^{\ast }-\left( \alpha
+\mu \right) A^{\ast } &=&0.  \label{2.08}
\end{eqnarray}
where%
\begin{equation}
\left\{
\begin{array}{l}
S^{\ast }=\dfrac{Q_{0}}{\beta _{m}^{\ast }+\mu }, \\
I_{1}^{\ast }=\dfrac{Q_{0}\beta _{m}^{\ast }}{(\theta +\delta +\mu )(\beta
_{m}^{\ast }+\mu )}, \\
I_{2}^{\ast }=\dfrac{Q_{0}\beta _{m}^{\ast }\theta }{(\theta +\delta +\mu
)(\delta +\mu +\pi )(\beta _{m}^{\ast }+\mu )}, \\
A^{\ast }=\dfrac{Q_{0}\beta _{m}^{\ast }((\pi +\delta )(\delta +\theta
)+\delta \mu )}{(\alpha +\mu )(\theta +\delta +\mu )(\delta +\mu +\pi
)(\beta _{m}^{\ast }+\mu )}, \\
N^{\ast }=\dfrac{Q_{0}-\alpha A^{\ast }}{\mu }.\medskip%
\end{array}%
\right.  \label{2.15}
\end{equation}%
By solving system (\ref{2.02}) at the equilibrium, we obtain
\begin{equation}
B_{1}\beta _{m}^{\ast }+B_{0}=0,  \label{2.17}
\end{equation}%
which implies that%
\begin{equation}
\beta _{m}^{\ast }=-\dfrac{B_{0}}{B_{1}},  \label{2.017}
\end{equation}%
where%
\begin{equation}
\left\{
\begin{array}{l}
B_{1}=\left( \delta +\alpha +\mu \right) \left( \theta +\mu +\delta \right)
+\pi \left( \alpha +\delta +\mu +\theta \right) >0,\medskip \\
B_{0}=\left( \alpha +\mu \right) \left( \pi +\delta +\mu \right) \left(
\delta +\theta +\mu \right) \left( 1-R_{0}\right) .\medskip%
\end{array}%
\right. \   \label{2.18}
\end{equation}%
Note that $R_{0}$ is the basic reproductive number of system (\ref{2.02}),
which represents the expected number of new infections produced by a typical
infective individual on a completely susceptible population. If $R_{0}\leq 1$%
, we obtain $\beta _{m}^{\ast }\leq 0.$ Therefore, the system (\ref{2.02})
admits a disease free equilibrium $E_{0}$. On the other hand, the case $%
R_{0}>1$ leads to $\beta _{m}^{\ast }>0$, in which case system (\ref{2.02})
admits two distinct equilibria: $E_{0}$ and the unique positive endemic
equilibrium $E_{1}(S^{\ast },I_{1}^{\ast },I_{2}^{\ast },A^{\ast })$.

Now that we have identified the effect of $R_{0}$ on the system's
equilibria, we must determine a formula for the number itself. We follow the
method explained in Driessche and Watmough's study \cite{P.Van den Driessche2002}.
We write
\begin{equation*}
R_{0}=\rho \left( FV^{-1}\right) ,
\end{equation*}%
where the special matrices $F$, for new infection terms, and $V$, for the
remaining transmission terms, related to the system (\ref{2.02}) are given as%
\begin{equation}
F\left( E_{0}\right) =\left[
\begin{array}{ccc}
\left( 1-u_{1}\right) \beta _{1}c_{1} & \left( 1-u_{1}\right) \beta _{2}c_{2}
& \left( 1-u_{1}\right) \beta _{3}c_{3} \\
0 & 0 & 0 \\
0 & 0 & 0%
\end{array}%
\right] ,  \label{2.10}
\end{equation}%
\ and%
\begin{equation}
V\left( E_{0}\right) =\left[
\begin{array}{ccc}
\left( \theta +\delta +\mu \right) & 0 & 0 \\
-\theta & \left( \delta +\mu +\pi \right) & 0 \\
-\delta & -\left( \delta +\pi \right) & \left( \alpha +\mu \right)%
\end{array}%
\right] .  \label{2.11}
\end{equation}%
Since $\det (V)=\left( \theta +\delta +\mu \right) \left( \delta +\mu +\pi
\right) \left( \alpha +\mu \right) \neq 0$, matrix $V$ $\ $has the inverse
\begin{equation}
V^{-1}=\frac{1}{\det (V)}\left[
\begin{array}{ccc}
\left( \delta +\mu +\pi \right) \left( \alpha +\mu \right) & 0 & 0 \\
\theta \left( \alpha +\mu \right) & \left( \theta +\delta +\mu \right)
\left( \alpha +\mu \right) & 0 \\
\theta \left( \delta +\pi \right) +\delta \left( \theta +\mu +\pi \right) &
\left( \delta +\pi \right) \left( \delta +\mu +\pi \right) & \left( \theta
+\delta +\mu \right) \left( \delta +\mu +\pi \right)%
\end{array}%
\right] .  \label{2.12}
\end{equation}%
Therefore, the basic reproduction number $R_{0}$ is given by%
\begin{equation}
R_{0}=\zeta _{1}+\zeta _{2}+\zeta _{3}+\zeta _{4},  \label{2.13}
\end{equation}%
where%
\begin{eqnarray*}
\zeta _{1} &=&\frac{(1-u_{1})\beta _{1}c_{1}}{\left( \theta +\delta +\mu
\right) },\medskip \\
\zeta _{2} &=&\frac{(1-u_{1})\beta _{2}c_{2}\theta }{\left( \theta +\delta
+\mu \right) \left( \delta +\mu +\pi \right) },\medskip \\
\zeta _{3} &=&\frac{(1-u_{1})\beta _{3}c_{3}\theta \left( \delta +\pi
\right) }{\left( \theta +\delta +\mu \right) \left( \delta +\mu +\pi \right)
\left( \alpha +\mu \right) },\medskip \\
\zeta _{4} &=&\frac{(1-u_{1})\beta _{3}c_{3}\delta }{\left( \theta +\delta
+\mu \right) \left( \alpha +\mu \right) }.\medskip
\end{eqnarray*}
\end{proof}

\section{Stability analysis}

\subsection{Local asymptotic stability when $R_{0}<1$}

Assuming $R_{0}<1$, system (\ref{2.02}) admits a single disease free
equilibrium $E_{0}$. We aim to investigate its local asymptotic stability as
described in the following theorem.

\begin{theorem}
\label{theo 3} For system (\ref{2.02}), if $R_{0}<1$, the disease-free
equilibrium solution $E_{0}$ is locally asymptotically stable.
\end{theorem}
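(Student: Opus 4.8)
The plan is to prove local asymptotic stability by linearising system (\ref{2.02}) about $E_0$ and showing that every eigenvalue of the Jacobian $J(E_0)$ has negative real part. First I would differentiate the four right-hand sides. The only subtle contributions come from the term $\beta_m S$: because the numerator $\beta_1 c_1 I_1+\beta_2 c_2 I_2+\beta_3 c_3 A$ of $\beta_m$ vanishes at $E_0$, both $\beta_m$ and $\partial\beta_m/\partial S$ vanish there, whereas $\partial\beta_m/\partial I_1$, $\partial\beta_m/\partial I_2$, $\partial\beta_m/\partial A$ survive and, using $S/N=1$ at $E_0$, contribute the factors $(1-u_1)\beta_i c_i$. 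This gives
\begin{equation*}
J(E_0)=\begin{bmatrix} -\mu & -(1-u_1)\beta_1 c_1 & -(1-u_1)\beta_2 c_2 & -(1-u_1)\beta_3 c_3 \\ 0 & (1-u_1)\beta_1 c_1-(\theta+\delta+\mu) & (1-u_1)\beta_2 c_2 & (1-u_1)\beta_3 c_3 \\ 0 & \theta & -(\delta+\mu+\pi) & 0 \\ 0 & \delta & \delta+\pi & -(\alpha+\mu) \end{bmatrix}.
\end{equation*}

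The second step is to exploit the block-triangular structure visible in the first column. Expanding $\det(\lambda I-J(E_0))$ along that column factors the characteristic polynomial as $(\lambda+\mu)\,q(\lambda)$, so $\lambda=-\mu<0$ is one eigenvalue and the remaining three are those of the lower-right $3\times 3$ block $M$. Crucially, $M$ coincides exactly with $F(E_0)-V(E_0)$, the matrices already computed in (\ref{2.10})--(\ref{2.11}). Thus the stability of $E_0$ is governed entirely by the spectrum of $F-V$.

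From here I see two routes. The quickest is to invoke Theorem 2 of van den Driessche and Watmough \cite{P.Van den Driessche2002}: since $V$ is a nonsingular M-matrix and $F\ge 0$, that theorem guarantees that all eigenvalues of $F-V$ have negative real part precisely when $R_0=\rho(FV^{-1})<1$, which together with the eigenvalue $-\mu$ yields local asymptotic stability. Alternatively, and more self-containedly, I would write $q(\lambda)=\lambda^3+a_1\lambda^2+a_2\lambda+a_3$ and verify the Routh--Hurwitz conditions $a_1>0$, $a_3>0$, $a_1a_2-a_3>0$. Here $a_1=-\operatorname{tr}M=(\theta+\delta+\mu)+(\delta+\mu+\pi)+(\alpha+\mu)-(1-u_1)\beta_1 c_1$, which is positive because $\zeta_1\le R_0<1$ forces $(1-u_1)\beta_1 c_1<\theta+\delta+\mu$; and the constant term satisfies $a_3=\det(V-F)=\det(V)\,(1-R_0)$, so $a_3>0$ is equivalent to $R_0<1$.

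The main obstacle is the last Routh--Hurwitz inequality $a_1a_2>a_3$, whose verification requires expanding $a_2$ (the sum of the principal $2\times 2$ minors of $M$) and collecting terms; this is where the bookkeeping with the parameter groupings $\theta+\delta+\mu$, $\delta+\mu+\pi$, $\alpha+\mu$ becomes heavy, and where I would lean on the smallness of the $\zeta_i$ (equivalently $R_0<1$) together with the sign assumptions $\beta_3\ll\beta_2\ll\beta_1$. For this reason I would favour the van den Driessche--Watmough route for a clean argument and retain the explicit Routh--Hurwitz computation only as a verification.
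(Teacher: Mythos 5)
Your proposal is correct, and its Routh--Hurwitz branch is essentially the paper's own proof: same Jacobian, same extraction of the eigenvalue $-\mu$ from the first column, same reduced $3\times 3$ block, and the same three conditions $a_1>0$, $a_3>0$, $a_1a_2-a_3>0$ verified via $\zeta_i<R_0<1$. The difference is that you stop at exactly the point the paper does the real work: the paper expands $a_1a_2-a_3$ explicitly and regroups it into a sum of terms each carrying a factor of the form $(1-\zeta_1)$, $(1-\zeta_1-\zeta_2)$, or $(1-\zeta_1-\zeta_4)$, all nonnegative because each $\zeta_i<R_0<1$; no appeal to $\beta_3\ll\beta_2\ll\beta_1$ is needed, so that part of your plan is a red herring. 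Your preferred alternative --- identifying the reduced block with $F(E_0)-V(E_0)$ and invoking Theorem 2 of van den Driessche and Watmough \cite{P.Van den Driessche2002} --- is a genuinely different and cleaner route that the paper does not take: it trades the explicit $a_1a_2>a_3$ bookkeeping for the (easy) verification of that theorem's hypotheses, namely that $F\ge 0$, that $V$ is a nonsingular M-matrix, and that the remaining non-infected block of the linearization (here the single eigenvalue $-\mu$) is stable. Either route closes the argument; as written, only the van den Driessche--Watmough branch of your proposal is actually complete, since the Routh--Hurwitz branch defers the one inequality that requires computation.
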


\begin{proof}
The Jacobian matrix corresponding to system (\ref{2.02}) evaluated at $E_{0}$
is given by%
\begin{equation}
J(E_{0})=\left[
\begin{array}{cccc}
-\mu & -\Lambda _{1} & -\Lambda _{2} & -\Lambda _{3} \\
0 & \Lambda _{1}-\left( \theta +\delta +\mu \right) & \Lambda _{2} & \Lambda
_{3} \\
0 & \theta & -\left( \delta +\mu \mathbf{+}\pi \right) & 0 \\
0 & \delta & \delta +\pi & -\left( \alpha \mathbf{+}\mu \right)%
\end{array}%
\right] ,  \label{2.19}
\end{equation}%
where \
$\Lambda _{1} =(1-u_{1})\beta _{1}c_{1},\
\Lambda _{2} =(1-u_{1})\beta _{2}c_{2},\
\text{and} \
\Lambda _{3}=(1-u_{1})\beta _{3}c_{3}$.

It is clear that $-\mu $ is one of the eigenvalues. Hence, by removing the
first column and the first row, the Jacobian matrix reduces to%
\begin{equation}
J(E_{0})=\left[
\begin{array}{ccc}
\Lambda _{1}-\left( \theta +\delta +\mu \right) & \Lambda _{2} & \Lambda _{3}
\\
\theta & -\left( \delta +\mu \mathbf{+}\pi \right) & 0 \\
\delta & \delta +\pi & -\left( \alpha \mathbf{+}\mu \right)%
\end{array}%
\right] .  \label{2.20}
\end{equation}%
It suffices to calculate the eigenvalues of the reduced matrix. Setting%
\begin{equation*}
\det (J(E_{0})-\lambda )=0,
\end{equation*}%
leads to the characteristic polynomial%
\begin{equation}
\lambda ^{3}+a_{1}\lambda ^{2}+a_{2}\lambda +a_{3}=0,  \label{2.21}
\end{equation}%
where%
\begin{eqnarray*}
a_{1} &=&\left( \alpha +\mu \right) +\left( \delta +\mu \mathbf{+}\pi
\right) +\left( \theta +\delta +\mu \right) (1-\zeta _{1}),\medskip \\
a_{2} &=&\left( \alpha +\mu \right) \left( \delta +\mu \mathbf{+}\pi \right)
\\
&&+\left( \delta +\mu \mathbf{+}\pi \right) \left( \theta +\delta +\mu
\right) \left( 1-\zeta _{1}-\zeta _{2}\right) \\
&&+\left( \alpha +\mu \right) \left( \theta +\delta +\mu \right) \left(
1-\zeta _{1}-\zeta _{4}\right) \medskip \\
a_{3} &=&\left( \theta +\delta +\mu \right) \left( \delta +\mu \mathbf{+}\pi
\right) \left( \alpha +\mu \right) \left( 1-R_{0}\right) .\medskip
\end{eqnarray*}%
The Routh-Hurwitz stability conditions are given by
\begin{equation}
a_{1}>0\text{, }a_{3}>0\text{, and }a_{1}a_{2}-a_{3}>0.  \label{2.22}
\end{equation}%
The condition $a_{3}>0$ is a direct result of our assumption $R_{0}<1$.
Since $\zeta _{1}>0$, then with respect to (\ref{2.13}) we can conclude that
$\zeta _{1}<R_{0}$. Thus, if $R_{0}<1$ then the first condition $a_{1}>0$ is
satisfied. The third stability condition requires more attention. With some
algebraic computations, we have%
\begin{eqnarray*}
a_{1}a_{2}-a_{3} &=&\left( \alpha +\mu \right) ^{2}\left( \pi +\delta +\mu
\right) +\left( \alpha +\mu \right) ^{2}\left( \theta +\delta +\mu \right)
\left( 1-\zeta _{1}-\zeta _{4}\right) \\
&&+\left( \delta +\mu \mathbf{+}\pi \right) ^{2}\left( \alpha +\mu \right)
+\left( \delta +\mu \mathbf{+}\pi \right) ^{2}\left( \theta +\delta +\mu
\right) \left( 1-\zeta _{1}-\zeta _{2}\right) \\
&&+\left( \delta +\mu \mathbf{+}\pi \right) \left( \alpha +\mu \right)
\left( \theta +\delta +\mu \right) \left[ 2\left( 1-\zeta _{1}\right) +\zeta
_{3}\right] \\
&&+\left( \theta +\delta +\mu \right) ^{2}\left( \delta +\mu \mathbf{+}\pi
\right) \left( 1-\zeta _{1}\right) \left( 1-\zeta _{1}-\zeta _{2}\right) \\
&&+\left( \theta +\delta +\mu \right) ^{2}\left( \alpha +\mu \right) \left(
1-\zeta _{1}-\zeta _{4}\right) \left( 1-\zeta _{1}\right) .
\end{eqnarray*}%
Since all the parameters $\zeta _{1},\zeta _{2},\zeta _{3},$ and $\zeta _{4}$
are smaller than $R_{0}$, it follows that if $R_{0}<1$, then $%
a_{1}a_{2}-a_{3}>0$. Hence, with the assumption that $R_{0}<1$, the three
Routh-Hurwitz stability conditions in (\ref{2.22}) are satisfied and the
disease-free equilibrium $E_{0\text{ }}$is locally asymptotically stable.
\end{proof}


To study the global asymptotic stability of the DFE $E_{0}$ one common
approach is through the construction of an appropriate Lyapunov function.
However, a simpler way is to apply the result introduced by Castillo-Chavez
\textit{et al.} \cite{C.Castillo2002}. Given $R_{0}<1$, $E_{0}$ is globally
asymptotically stable as proved by the authors of \cite{M.S.Zahedi2017}.

\subsection{Local asymptotic stability when $R_{0}>1$}

As described earlier in the paper, for $R_{0}>1$, system (\ref{2.02}) admits
two equilibria: the DFE $E_{0}$ and the endemic equilibrium $E_{1}(S^{\ast
},I_{1}^{\ast },I_{2}^{\ast },A^{\ast })$. In order to establish their local
asymptotic stability, let us first state a necessary lemma taken from \cite{S.Wiggins2003} that will aid with the proof to come.

\begin{lemma}
\cite{S.Wiggins2003} \label{theo 11}\textbf{(Descartes' Rule of Signs) }%
Consider a polynomial with real coefficients of the form%
\begin{equation}
p\left( \lambda \right) =a_{0}\lambda ^{n}+a_{1}\lambda
^{n-1}+...+a_{n-1}\lambda +a_{n}=0,a_{0}\neq 0.  \label{3}
\end{equation}%
The sequence of coefficients of (\ref{3}) is given by%
\begin{equation*}
a_{n},a_{n-1},...,a_{1},a_{0}.
\end{equation*}%
Let $k$ be the total number of sign changes from one coefficient to the next
in the sequence. Then, the number of positive real roots of the polynomial
is either equal to $k$, or $k$ minus a positive even integer. (Note: if $\
k=1$, then there exists exactly one positive real root).
\end{lemma}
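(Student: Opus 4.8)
The plan is to establish the rule in its sharp form, from which the statement and its parenthetical remark follow immediately. Write $V(p)$ for the number of sign changes $k$ in the coefficient sequence $a_n,a_{n-1},\dots,a_0$ (skipping any zero coefficients), and write $Z(p)$ for the number of positive real roots of $p$ counted with multiplicity. The goal is the pair of assertions
\[
Z(p)\le V(p)\qquad\text{and}\qquad V(p)-Z(p)\ \text{is even.}
\]
Granting these, $Z(p)$ is obtained from $V(p)$ by subtracting a non-negative even integer, which is precisely the claimed statement; and if $k=V(p)=1$ then $Z(p)\le 1$ with $Z(p)$ odd, forcing $Z(p)=1$, which is the parenthetical remark. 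After factoring out any power of $\lambda$ (this removes a root at the origin and alters neither $V$ nor the number of positive roots), I may assume the constant term $a_n\neq 0$.

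The heart of the argument is a lemma on multiplication by a positive linear factor: for every $r>0$,
\[
V\bigl((\lambda-r)\,p(\lambda)\bigr)=V(p)+(\text{a positive odd integer}).
\]
I would prove this in two pieces. The parity is easy: for a polynomial with nonzero constant term, $V$ is even exactly when the leading and constant coefficients share a sign; multiplying by $(\lambda-r)$ preserves the leading coefficient but replaces the constant term $c$ by $-rc$, flipping its sign, so $V\bigl((\lambda-r)p\bigr)$ and $V(p)$ have opposite parity and their difference is odd. The monotonicity $V\bigl((\lambda-r)p\bigr)\ge V(p)$ is the delicate part and the step I expect to be the main obstacle: writing $p(\lambda)=\sum_i c_i\lambda^i$, the coefficients of $(\lambda-r)p$ are $c_{i-1}-r\,c_i$, and one must follow the signs across each maximal block of consecutive coefficients of $p$ that keep a constant sign in order to check that no pre-existing sign change is destroyed. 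Combining the two pieces, the increment is a non-negative odd integer, hence positive and odd.

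With the lemma available the proof assembles quickly. Factor
\[
p(\lambda)=a_0\,(\lambda-r_1)\cdots(\lambda-r_m)\,q(\lambda),
\]
where $r_1,\dots,r_m$ are the positive roots listed with multiplicity, so $m=Z(p)$, and $q$ is a real polynomial with no positive real root and $q(0)\neq 0$. Because $q$ does not vanish on $(0,\infty)$ it keeps one sign there, so its constant and leading coefficients agree in sign; a coefficient sequence whose first and last nonzero entries have the same sign has an even number of sign changes, whence $V(q)$ is even. Applying the lemma once for each factor $(\lambda-r_i)$ (multiplication by the constant $a_0$ leaves $V$ unchanged), $V(p)$ exceeds $V(q)$ by a sum of $m$ positive odd integers, so
\[
V(p)\ge V(q)+m\ge m=Z(p)\qquad\text{and}\qquad V(p)\equiv V(q)+m\equiv m\equiv Z(p)\pmod 2.
\]
This yields both $Z(p)\le V(p)$ and the evenness of $V(p)-Z(p)$, completing the proof.
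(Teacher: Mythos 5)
The paper never proves this lemma: it is quoted from the cited reference \cite{S.Wiggins2003} as a classical fact, so there is no in-paper argument to compare against. Your proposal is the standard textbook proof of Descartes' rule --- factor $p=a_0(\lambda-r_1)\cdots(\lambda-r_m)\,q$ over the positive roots and track the sign-change count $V$ under multiplication by positive linear factors --- and its architecture is sound. The parity step (multiplying by $\lambda-r$ preserves the leading coefficient and flips the sign of the constant term, hence flips the parity of $V$), the observation that $V(q)$ is even because $q$ keeps one sign on $(0,\infty)$ so its first and last nonzero coefficients agree in sign, and the final assembly giving $Z(p)\le V(p)$ with $V(p)-Z(p)$ even, from which the $k=1$ remark follows, are all correct.

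The one genuine gap is the step you yourself flag: the inequality $V\bigl((\lambda-r)p\bigr)\ge V(p)$ for $r>0$ is asserted and described but never proved. This is not a routine verification to be waved at --- it is essentially the entire combinatorial content of the theorem, since everything else in your argument amounts to a few lines. The block-by-block sign tracking you allude to requires real care: writing the new coefficients as $c_{i-1}-r\,c_i$, one must show that each maximal run of consecutive coefficients of $p$ of constant sign contributes at least one new coefficient of that same sign, positioned so that every pre-existing sign change survives, and the bookkeeping around interior zero coefficients is exactly where naive versions of this argument fail. Until that lemma is actually established (or replaced by an alternative, e.g.\ the Rolle-type induction bounding $Z(p)$ via $Z(p')+1$ and comparing $V(p)$ with $V(p')$), what you have is a correct outline rather than a complete proof; everything downstream of that lemma is fine.
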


With this lemma in mind, we move to study the local asymptotic stability of
the two equilibria as described in the following theorem.

\begin{theorem}
For system (\ref{2.02}), if $R_{0}>1$, $E_{0}$ is unstable and
$E_{1}(S^{\ast },I_{1}^{\ast },I_{2}^{\ast },A^{\ast })$ is locally
asymptotically stable.\label{theo 5}
\end{theorem}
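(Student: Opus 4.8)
The plan is to establish the two assertions separately, in each case reading off the sign information of the relevant characteristic polynomial and invoking Descartes' Rule of Signs (Lemma \ref{theo 11}) where it is decisive.

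For the instability of $E_0$ when $R_0>1$, I would simply reuse the reduced cubic $\lambda^3+a_1\lambda^2+a_2\lambda+a_3=0$ from (\ref{2.21}), which already governs three of the four eigenvalues of $J(E_0)$ (the eigenvalue $-\mu$ having been split off in the proof of Theorem \ref{theo 3}). The decisive point is that its constant term $a_3=(\theta+\delta+\mu)(\delta+\mu+\pi)(\alpha+\mu)(1-R_0)$ now satisfies $a_3<0$, precisely because $R_0>1$. Reading the coefficient sequence from the constant term $a_3<0$ up to the positive leading coefficient $1$, the endpoints have opposite signs, so the total number of sign changes is odd; by Descartes' Rule the cubic has an odd number of positive real roots and hence at least one. (Equivalently, $p(0)=a_3<0$ while $p(\lambda)\to+\infty$, so the Intermediate Value Theorem yields a positive root.) A positive eigenvalue of $J(E_0)$ makes $E_0$ unstable, which settles the first claim.

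For the local asymptotic stability of $E_1$, I would form the full $4\times4$ Jacobian of (\ref{2.02}), taking care that $\beta_m$ in (\ref{2.0}) depends on all four states through $N$, so that no row decouples and every entry of the first two rows carries a derivative of $\beta_m S$. Evaluating at $E_1$ and expanding $\det(J(E_1)-\lambda I)=0$ yields a quartic $\lambda^4+b_1\lambda^3+b_2\lambda^2+b_3\lambda+b_4=0$. Using the equilibrium identities (\ref{2.05})--(\ref{2.08}) and the closed forms in (\ref{2.15}) to replace $\beta_m^{\ast}S^{\ast}$ and the ratios $S^{\ast}/N^{\ast}$, I would simplify $b_1,\dots,b_4$ and verify that each is strictly positive when $R_0>1$; Descartes' Rule then rules out positive real eigenvalues. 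To upgrade ``no positive real root'' to genuine asymptotic stability, I would then check the Routh--Hurwitz conditions for a quartic, namely $b_1>0$, $b_1b_2-b_3>0$, $b_3(b_1b_2-b_3)-b_1^2b_4>0$, and $b_4>0$, which together force every eigenvalue to lie in the open left half-plane.

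The main obstacle will lie entirely in this second part: because $\beta_m$ couples all four equations, the entries of $J(E_1)$ and hence the coefficients $b_i$ are bulky rational functions of the parameters, and establishing the positivity of $b_2$, $b_3$, and especially the Hurwitz determinant $b_3(b_1b_2-b_3)-b_1^2b_4$ is delicate. I would attack this by systematically substituting the equilibrium relations to cancel the leading-order terms and by invoking the standing ordering $\beta_3\ll\beta_2\ll\beta_1$ to dominate the remaining sign-indefinite contributions; a useful fallback, should the Hurwitz determinant resist a closed-form sign analysis, is to exploit the near-triangular coupling of the $I_2$ and $A$ equations (with $\beta_3c_3$ small) to peel off the $-(\alpha+\mu)$ direction and reduce the core computation to a three-dimensional block.
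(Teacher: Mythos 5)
Your overall route coincides with the paper's: the instability of $E_0$ is read off from the sign of $a_3=(\theta+\delta+\mu)(\delta+\mu+\pi)(\alpha+\mu)(1-R_0)<0$ in the cubic (\ref{2.21}) (the paper simply points back to the proof of Theorem \ref{theo 3}), and for $E_1$ the paper likewise forms the full $4\times4$ Jacobian, expands the quartic (\ref{2.30}), and uses the equilibrium identities (\ref{2.06})--(\ref{2.08}) to derive the key relation $CEF_1+E\theta F_2+\delta CF_3+\theta DF_3=BCE$ in (\ref{2.35}), from which $F_1<B$, $CF_1+\theta F_2<BC$, $EF_1+\delta F_3<BE$ and hence $p_1,p_2,p_3,p_4>0$. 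Up to that point your plan and the paper's proof are the same argument.

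Where you genuinely diverge is in what you do with the positivity of the coefficients. The paper stops there and asserts, via Descartes' rule, that the quartic ``has four negative real roots''; that inference is not valid --- zero sign changes only excludes positive \emph{real} roots and says nothing about complex roots with positive real part, so positivity of all $p_i$ is necessary but not sufficient for a quartic to be Hurwitz. You correctly identify this and propose the missing step, namely the Routh--Hurwitz conditions $p_1p_2-p_3>0$ and $p_3(p_1p_2-p_3)-p_1^2p_4>0$. However, your proposal does not actually establish these inequalities: you defer them to ``delicate'' computations and offer as a fallback the ordering $\beta_3\ll\beta_2\ll\beta_1$, which in this paper is only an informal modelling assumption with no quantitative content, so it cannot be invoked to dominate sign-indefinite terms in the Hurwitz determinant. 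As it stands, then, your write-up contains the same unclosed gap as the paper's (the passage from positive coefficients to left-half-plane eigenvalues), with the difference that you have at least named it; to complete the proof you would need to carry out the Hurwitz-determinant verification explicitly, presumably by substituting (\ref{2.35}) into $p_1,\dots,p_4$ and grouping terms so that each contribution to $p_1p_2-p_3$ and $p_3(p_1p_2-p_3)-p_1^2p_4$ is manifestly nonnegative.
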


\begin{proof}
If $R_{0}>1$, from what we saw in the proof of Theorem \ref{theo 3}, $E_{0}$
is clearly unstable. The second equilibrium $E_{1}(S^{\ast },I_{1}^{\ast
},I_{2}^{\ast },A^{\ast })$ requires evaluation. The Jacobian matrix
corresponding to system (\ref{2.02}) evaluated at $E_{1}$ is given by%
\begin{equation*}
J(E_{1})=\left(
\begin{array}{cccc}
-G-\mu & -F_{1}+J_{1} & -F_{2}+J_{1} & -F_{3}+J_{1} \\
G & F_{1}-J_{1}-B & F_{2}-J_{1} & F_{3}-J_{1} \\
0 & \theta & -C & 0 \\
0 & \delta & D & -E%
\end{array}%
\right) ,
\end{equation*}%
where
\begin{equation*}
B=\theta +\delta +\mu ,C=\delta +\mu +\pi ,D=\delta +\pi ,E=\alpha +\mu ,
\end{equation*}%
and%
\begin{equation*}
\left\{
\begin{array}{l}
G=\dfrac{(N^{\ast }-S^{\ast })(1-u_{1})(\beta _{1}C_{1}I_{1}^{\ast }+\beta
_{2}C_{2}I_{2}^{\ast }+\beta _{3}C_{3}A^{\ast })}{N^{\ast ^{2}}}, \\
F_{1}=\dfrac{(1-u_{1})\beta _{1}C_{1}}{N^{\ast }}S^{\ast }, \\
J_{1}=\dfrac{(1-u_{1})(\beta _{1}C_{1}I_{1}^{\ast }+\beta
_{2}C_{2}I_{2}^{\ast }+\beta _{3}C_{3}A^{\ast })}{N^{\ast ^{2}}}S^{\ast },
\\
F_{2}=\dfrac{(1-u_{1})\beta _{2}C_{2}}{N^{\ast }}S^{\ast }, \\
F_{3}=\dfrac{(1-u_{1})\beta _{3}C_{3}}{N^{\ast }}S^{\ast }.%
\end{array}%
\right.
\end{equation*}%
The characteristic equation corresponding to $J(E_{1})$ is
\begin{equation}
\lambda ^{4}+p_{1}\lambda ^{3}+p_{2}\lambda ^{2}+p_{3}\lambda +p_{4},
\label{2.30}
\end{equation}%
$\allowbreak \allowbreak $\newline
where
\begin{equation}
p_{1}=B+C+G+\mu +E-F_{1}+J_{1},  \label{2.31}
\end{equation}%
\begin{eqnarray}
p_{2} &=&\mu \left( B-F_{1}\right) +C\mu +CE+\mu E+\left( BC-CF_{1}-\theta
F_{2}\right) \label{2.32} \\
&&+\left( BE-\delta F_{3}-EF_{1}\right) +\left( C+B+E\right) \left(
J_{1}+G\right) ,  \notag
\end{eqnarray}%
\begin{eqnarray}
p_{3} &=&\mu \left( BC-CF_{1}-\theta F_{2}\right) +C\mu E  \notag \\
&&+\left( CE+\mu \left( B+C+\alpha \right) +C\delta +\theta D+\theta
E\right) J_{1}  \notag \\
&&+\left( BCE-CEF_{1}-\theta DF_{3}-\theta EF_{2}-C\delta F_{3}\right)
\label{2.33} \\
&&+\mu \left( BE-\delta F_{3}-EF_{1}\right) +\left( BC+BE+CE\right) G,
\notag
\end{eqnarray}%
and
\begin{eqnarray}
p_{4} &=&BCGE+\left( BC\mu E-C\mu EF_{1}-\theta \mu DF_{3}-\theta \mu
EF_{2}-C\mu \delta F_{3}\right)  \notag  \\
&&+\left( C\mu \delta +C\mu E+\theta \mu D+\theta \mu E\right) J_{1}. \label{2.34}
\end{eqnarray}%
Furthermore, multiplying equality (\ref{2.06}) by $E\theta $ and using (\ref%
{2.07}) yields%
\begin{equation*}
\left. \dfrac{(1-u_{1})(\beta _{1}C_{1}I_{1}^{\ast }E\theta +\beta
_{2}C_{2}I_{2}^{\ast }E\theta +\beta _{3}C_{3}A^{\ast }E\theta )}{N^{\ast }}%
S^{\ast }=\left( \theta +\mu +\delta \right) I_{1}^{\ast }E\theta .\right.
\end{equation*}%
This implies that%
\begin{equation*}
\left.
\begin{array}{c}
\dfrac{(1-u_{1})(\beta _{1}C_{1}\left( \delta +\mu +\pi \right) I_{2}^{\ast
}E+\beta _{2}C_{2}I_{2}^{\ast }E\theta +\beta _{3}C_{3}A^{\ast }E\theta )}{%
N^{\ast }}S^{\ast } \\
=\left( \theta +\mu +\delta \right) \left( \delta +\mu +\pi \right)
I_{2}^{\ast }E.%
\end{array}%
\right.
\end{equation*}%
Thus, from (\ref{2.08}), we obtain
\begin{equation*}
A^{\ast }E\theta =I_{2}^{\ast }\left[ \delta \left( \delta +\mu +\pi \right)
+\theta (\pi +\delta )\right] ,
\end{equation*}%
leading to%
\begin{equation}
CEF_{1}+E\theta F_{2}+\delta CF_{3}+\theta DF_{3}=BCE.  \label{2.35}
\end{equation}%
From (\ref{2.35}), we have
\begin{equation}
F_{1}<B,  \label{2.36}
\end{equation}%
\begin{equation}
CF_{1}+\theta F_{2}<BC,  \label{2.37}
\end{equation}%
and%
\begin{equation}
EF_{1}+\delta F_{3}<BE.  \label{2.38}
\end{equation}%
Substituting (\ref{2.35}) into (\ref{2.31})-(\ref{2.34}) leads to the
coefficients%
\begin{eqnarray*}
p_{1} &=&\left( B-F_{1}\right) +C+G+\mu +E+J_{1},\medskip \\
p_{2} &=&\mu \left( B-F_{1}\right) +C\mu +CE+\mu E+\left( BC-CF_{1}-\theta
F_{2}\right) \\
&&+\left( BE-\delta F_{3}-EF_{1}\right) +\left( C+B+E\right) \left(
J_{1}+G\right) ,\medskip \\
p_{3} &=&\mu \left[ \left( BC-CF_{1}-\theta F_{2}\right) +\left( BE-\delta
F_{3}-EF_{1}\right) \right] \\
&&+\left( CE+\mu \left( B+C+\alpha \right) +C\delta +\theta D+\theta
E\right) J_{1} \\
&&+\left( BC+BE+CE\right) G+\mu CE,\medskip \\
p_{4} &=&BCGE+\left( C\delta +CE+\theta D+\theta E\right) \mu J_{1}.\medskip
\end{eqnarray*}%
From (\ref{2.36})-(\ref{2.38}), it is clear that $p_{1},p_{2},p_{3}$ and $%
p_{4}$ are positive (no changes in signs). Hence, by Descarte's rule of
signs as stated in lemma\textbf{\ \ref{theo 11}, }equation (\ref{2.30}) has
four negative real roots, from which follows that\textbf{\ }$E_{1}$ is
locally asymptotically stable.
\end{proof}

On other hand, If $R_{0}>1$, the endemic equilibrium $E_{1}$ of system (\ref{2.02}) is globally asymptotically stable. The authors of \cite{M.S.Zahedi2017} answered questions of the global stability dynamics of the endemic equilibrium in case $\alpha =0$ for system
(\ref{2.02}). Their method combines Lyapunov functions and Volterra-Lyapunov stable matrices. The endemic equilibrium for this model is globally asymptotically stable for $\alpha \neq 0$ when $R_{0}$ $>1$.

\section{Control of the HIV/AIDS model}

Let us now update system (\ref{2.02}) to include the three control
strategies $u_{1}$, $u_{2}$, and $u_{3}$, which denote condom use, screening
of unaware infectives. and treatment of unaware infectives, respectively.
These strategies are aimed at controlling of the spread of the HIV/AIDS
epidemic. The controlled system is given by

\begin{equation}
\left\{
\begin{array}{l}
\dfrac{dS}{dt}=Q_{0}-\beta _{m}S-\mu S,\medskip \\
\dfrac{dI_{1}}{dt}=\beta _{m}S-\left( u_{2}\theta +\mu +\delta \right)
I_{1},\medskip \\
\dfrac{dI_{2}}{dt}=u_{2}\theta I_{1}-\left( \delta +\mu +u_{3}\pi \right)
I_{2},\medskip \\
\dfrac{dA}{dt}=\delta I_{1}+\delta I_{2}+u_{3}\pi I_{2}-\left( \alpha +\mu
\right) A,\medskip%
\end{array}%
\right.  \label{4}
\end{equation}%
subject to the initial conditions
\begin{equation}
S(0)=S_{0},I_{1}(0)=I_{0}{}_{1},I_{2}(0)=I_{0}{}_{2},A(0)=A_{0}.
\end{equation}%
Let us also defind the objective functional as
\begin{equation}
J(u_{1},u_{2},u_{3})=\int_{0}^{T}\left(
aI_{1}+b_{1}u_{1}^{2}+b_{2}u_{2}^{2}+b_{3}u_{3}^{2}\right) dt,  \label{4.2}
\end{equation}%
where $T$ denotes the final time and the coefficients $a$, $b_{1}$, $b_{2}$
are positive weights. The term $aI_{1}$ represents the cost of infection
while $b_{1}u_{1}^{2}$, $b_{2}u_{2}^{2}$, and $b_{3}u_{3}^{2}$ are the costs
associated with condom use, screening of unaware infectives and treatment of
infectives, respectively. Our objective is to minimize the number of unaware
infectives $I_{1}$ while also minimizing the cost of the three control
measures $u_{1}$, $u_{2}$ and $u_{3}$. We seek an optimal control tuple $%
\left( u_{1}^{\ast },u_{2}^{\ast },u_{3}^{\ast }\right) $ such that
\begin{equation}
J\left( u_{1}^{\ast },u_{2}^{\ast },u_{3}^{\ast }\right) =\min \left\{
J\left( u_{1},u_{2},u_{3}\right) :u_{1},u_{2}\text{ and }u_{3}\in U\right\} ,
\label{4.3}
\end{equation}%
where $U$ is the admissible control set defined as
\begin{equation*}
U=\left\{ \left(
u_{1},u_{2},u_{3}\right) :u_{i}\text{ is measurable, }0\leq u_{i}\leq 1,%
\text{ }t\in \left[ 0,T\right] ,\text{ for }i=1,2,3\right\} .
\end{equation*}
\section{Existence of an optimal control}

\begin{lemma}
There exists an optimal control $\left( u_{1}^{\ast },u_{2}^{\ast
},u_{3}^{\ast }\right) \in U$ such that%
\begin{equation*}
J\left( u_{1}^{\ast },u_{2}^{\ast },u_{3}^{\ast }\right) =\underset{\left(
u_{1},u_{2},u_{3}\right) \in U}{\min }J\left( u_{1},u_{2},u_{3}\right)
\end{equation*}%
subject to the control system (\ref{4}).
\end{lemma}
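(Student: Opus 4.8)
The plan is to verify the hypotheses of the classical existence theorem for optimal controls due to Fleming and Rishel, whose assumptions reduce to five checkable conditions: (a) the set of admissible control--state pairs is non-empty; (b) the control set $U$ is closed and convex; (c) the right-hand side of the state system is bounded above by a linear function of the state and control variables; (d) the integrand of the cost functional $J$ is convex in the controls; and (e) the integrand obeys a coercivity lower bound of the form $c_{1}\left(|u_{1}|^{2}+|u_{2}|^{2}+|u_{3}|^{2}\right)^{\rho/2}-c_{2}$ with $\rho>1$. Establishing these five items in turn is the whole of the argument.

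First I would settle non-emptiness. For any fixed admissible triple $(u_{1},u_{2},u_{3})\in U$, the right-hand side of (\ref{4}) is continuous and, on the invariant region $\Upsilon$ obtained in the earlier boundedness theorem, Lipschitz in the state variables $(S,I_{1},I_{2},A)$; hence the associated initial value problem admits a unique solution that remains in $\Upsilon$ for all $t\in[0,T]$. This yields at least one admissible control--state pair and, crucially, guarantees that every state trajectory is bounded by $Q_{0}/\mu$.

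The structural conditions are then essentially immediate. The set $U$ is a product of the intervals $[0,1]$, so it is closed, bounded, and convex. Because the controls enter (\ref{4}) affinely and the states remain in the bounded region $\Upsilon$, there is a constant $c$ with $|f(t,X,u)|\le c\left(1+|X|+|u|\right)$, which supplies the linear-growth bound. The integrand $L=aI_{1}+b_{1}u_{1}^{2}+b_{2}u_{2}^{2}+b_{3}u_{3}^{2}$ is the sum of a term free of the controls and a positive-definite quadratic form in $(u_{1},u_{2},u_{3})$, hence convex in the controls. Finally, writing $b_{\min}=\min\{b_{1},b_{2},b_{3}\}>0$ and using $aI_{1}\ge 0$ on $\Upsilon$, one obtains $L\ge b_{\min}\left(u_{1}^{2}+u_{2}^{2}+u_{3}^{2}\right)$, which is the required coercivity bound with $\rho=2>1$, $c_{1}=b_{\min}$, and $c_{2}=0$.

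With all hypotheses in force, the existence theorem delivers an optimal triple $(u_{1}^{\ast},u_{2}^{\ast},u_{3}^{\ast})\in U$ attaining the minimum of $J$, which completes the proof. I expect the only substantive step to be the boundedness of the trajectories: it is what underpins both the linear-growth estimate in (c) and the compactness used by the theorem, and it is precisely the point at which the invariant-region result is needed. The remaining verifications (b), (d), and (e) are routine, as they follow directly from the explicit quadratic form of the integrand and the box structure of $U$.
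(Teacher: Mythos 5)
Your proposal is correct and follows essentially the same route as the paper: both verify the Fleming--Rishel hypotheses (non-emptiness of the admissible set, closedness and convexity of $U$, linear growth of the dynamics using the invariant region, convexity of the integrand in the controls, and the quadratic coercivity bound with $\rho=2$). Your choice $c_{2}=0$ in the coercivity estimate is in fact cleaner than the paper's $c_{2}=\inf I_{1}$, and your use of Lipschitz continuity on $\Upsilon$ for non-emptiness is equivalent to the paper's appeal to continuity of the partial derivatives.
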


\begin{proof}
The existence of the optimal control can be proved using a result by Fleming
and Rishel \cite{Fleming1975}. We first check that the following
conditions hold for system (\ref{4}):

(C1) To prove that the set of controls and the corresponding state variables
is nonempty, we will use a simplified version of the existence result in
 (\cite{E. Boyce2009}, Theorem 7.1.1). Let $\overset{.}{S}%
=F_{S}(t,S,I_{1},I_{2},A)$, $\overset{.}{I_{1}}$ $=$ $%
F_{I_{1}}(t,S,I_{1},I_{2},A)$, $\overset{.}{I_{2}}%
=F_{I_{2}}(t,S,I_{1},I_{2},A)$ and $\overset{.}{A}$ $=$ $%
F_{A}(t,S,I_{1},I_{2},A)$, where $F_{S},$ $F_{I_{1}},$ $F_{I_{2}}$ and $%
F_{A} $ form the right-hand sides of system (\ref{4}). Let $u_{i}(t)=c_{i}$
for $i=1,2,3$ for some constants. Since all parameters are constant and $S,$
$I_{1},$ $I_{2}$ and $A$ are continuous, then $F_{S},$ $F_{I_{1}},$ $%
F_{I_{2}} $ and $F_{A}$ are also continuous. Additionally, the partial
derivatives $\tfrac{\partial F_{S}}{\partial S}$, $\tfrac{\partial F_{S}}{%
\partial I_{1}}$, $\frac{\partial F_{S}}{\partial I_{2}}$, $\frac{\partial
F_{S}}{\partial A} $, $\frac{\partial F_{I_{1}}}{\partial S}$, $\frac{%
\partial F_{I_{1}}}{\partial I_{1}}$, $\frac{\partial F_{I_{1}}}{\partial
I_{2}}$, $\frac{\partial F_{I_{1}}}{\partial A}$,\ $\frac{\partial F_{I_{2}}%
}{\partial S}$, $\frac{\partial F_{I_{2}}}{\partial I_{1}}$, $\frac{\partial
F_{I_{2}}}{\partial I_{2}}$, $\frac{\partial F_{I_{2}}}{\partial A}$, $\frac{%
\partial F_{A}}{\partial S}$, $\frac{\partial F_{A}}{\partial I_{1}}$, $%
\frac{\partial F_{A}}{\partial I_{2}}$, $\frac{\partial F_{A}}{\partial A}$
are all continuous. Therefore, there exists a unique solution $\left(
S,I_{1},I_{2},A\right) $ that satisfies the initial conditions, which
implies that the set of controls and corresponding state variables is
nonempty.

(C2) The control set $U$ is convex and closed by definition.

(C3) Each of the right hand side terms $F_{S},$ $F_{I_{1}},$ $F_{I_{2}}$ and
$F_{A}$ of system (\ref{4}) is continuous, bounded above by a sum of the
bounded control and state, and can be written as a linear function of $u$
with coefficients depending on time and state solutions.

(C4) The integrand in the objective functional $%
aI_{1}+b_{1}u_{1}^{2}+b_{2}u_{2}^{2}+b_{3}u_{3}^{2}$ is clearly convex on $u$%
. Finally, there exist constants $c_{1},c_{2}>0$ and $\gamma >1$ such that%
\begin{equation*}
L(x,u,t)=aI_{1}+b_{1}u_{1}^{2}+b_{2}u_{2}^{2}+b_{3}u_{3}^{2}\geq c_{1}\left(
\left\vert u_{1}(t)\right\vert ^{2}+\left\vert u_{2}(t)\right\vert
^{2}+\left\vert u_{3}\right\vert ^{2}\right) ^{\gamma \diagup 2}-c_{2}.
\end{equation*}%
Since $b_{1},b_{2},b_{3}>0$ and $I_{1}$ is bounded, it suffices to choose $%
c_{1}=\underset{t\in \left[ 0,T\right] }{\inf }\left(
b_{1},b_{2},b_{3}\right) ,$ $c_{2}=\underset{t\in \left[ 0,T\right] }{\inf }%
I_{1}$ and $\gamma =2$. The conditions of Corollary 4.1 in \cite{Fleming1975} are satisfied for system (\ref{4}). Therefore, we conclude that there
exists an optimal control $\left( u_{1}^{\ast },u_{2}^{\ast },u_{3}^{\ast
}\right) \in U$ such that%
\begin{equation*}
J\left( u_{1}^{\ast },u_{2}^{\ast },u_{3}^{\ast }\right) =\underset{\left(
u_{1},u_{2},u_{3}\right) \in U}{\min }J\left( u_{1},u_{2},u_{3}\right) .
\end{equation*}
\end{proof}

\section{Characterization of the optimal control}

The necessary conditions that an optimal control problem must satisfy come
from Pontryagin's maximum principle \cite{L.Pontryagin1986}. This
principle converts the controlled system (\ref{4}) into a problem of
minimizing point wise a Hamiltonian $H$ with respect to the control
parameters $u_{1}$, $u_{2}$ and $u_{3}$. The Hamiltonian $H$ \ is formulated
from the cost functional and the objective functional in order to obtain the
optimality condition. We define the adjoint or co-state variables $\lambda
_{S}$, $\lambda _{I_{1}}$, $\lambda _{I_{2}}$ and $\lambda _{A}$ for $S$, $%
I_{1}$, $I_{2}$ and $A$, respectively. The Hamiltonian is defined as
\begin{eqnarray}
H &=&aI_{1}+b_{1}u_{1}^{2}+b_{2}u_{2}^{2}+b_{3}u_{3}^{2}+\lambda _{S}\left(
Q_{0}-\beta _{m}S-\mu S\right)  \notag  \label{5} \\
&&+\lambda _{I_{1}}\left( \beta _{m}S-\left( u_{2}\theta +\mu +\delta
\right) I_{1}\right) +\lambda _{I_{2}}\left( u_{2}\theta I_{1}-\left( \delta
+\mu +u_{3}\pi \right) I_{2}\right) \\
&&+\lambda _{A}\left( \delta I_{1}+\delta I_{2}+u_{3}\pi I_{2}-\left( \alpha
+\mu \right) A\right) .  \notag
\end{eqnarray}%
The form of the adjoint equations and transversality conditions are standard
results that follow from Pontryagin's maximum principle \cite{L.Pontryagin1986}. The adjoint system along with the transversality and
optimality conditions corresponding to the system (\ref{4}) are stated in
the following theorem.

\begin{theorem}
For the optimal controls $u_{1}^{\ast }$, $u_{2}^{\ast }$ and $u_{3}^{\ast }$
that minimizes $J\left( u_{1},u_{2},u_{3}\right)$ over
$U$, there exist adjoint variables $\lambda _{S}$, $\lambda _{I_{1}}$, $%
\lambda _{I_{2}}$ and $\lambda _{A}$ satisfying%
\begin{equation}
\left\{
\begin{array}{l}
\dfrac{d\lambda _{S}}{dt}=\left\{ \beta _{m}-\dfrac{\beta _{m}S}{N}\right\}
\left( \lambda _{S}-\lambda _{I_{1}}\right) +\mu \lambda _{S},\medskip \\
\dfrac{d\lambda _{I_{1}}}{dt}=\dfrac{\left( 1-u_{1}\right) \beta
_{1}c_{1}S-\beta _{m}S}{N}\left( \lambda _{S}-\lambda _{I_{1}}\right)
+\lambda _{I_{1}}\left( u_{2}\theta +\mu +\delta \right) -u_{2}\theta
\lambda _{I_{2}}-\delta \lambda _{A}-a,\medskip \\
\dfrac{d\lambda _{I_{2}}}{dt}=\dfrac{\left( 1-u_{1}\right) \beta
_{2}c_{2}S-\beta _{m}S}{N}\left( \lambda _{S}-\lambda _{I_{1}}\right)
+\lambda _{I_{2}}\left( \delta +\mu +u_{3}\pi \right) -\lambda _{A}\left(
\delta +u_{3}\pi \right) ,\medskip \\
\dfrac{d\lambda _{A}}{dt}=\dfrac{\left( 1-u_{1}\right) \beta
_{3}c_{3}S-\beta _{m}S}{N}\left( \lambda _{S}-\lambda _{I_{1}}\right)
+\lambda _{A}\left( \alpha +\mu \right) ,\medskip%
\end{array}%
\right.  \label{5.01}
\end{equation}%
subject to the transversality conditions%
\begin{equation}
\lambda _{S}\left( T\right) =\lambda _{I_{1}}\left( T\right) =\lambda
_{I_{2}}\left( T\right) =\lambda _{A}\left( T\right) =0.  \label{5.02}
\end{equation}%
The controls $u_{1}^{\ast }$, $u_{2}^{\ast }$ and $u_{3}^{\ast }$ satisfy
the optimality conditions%
\begin{eqnarray}
u_{1}^{\ast } &=&\max \left\{ 0,\min \left( 1,\dfrac{\left( \beta
_{1}c_{1}I_{1}+\beta _{2}c_{2}I_{2}+\beta _{3}c_{3}A\right) S}{2b_{1}N}%
\left( \lambda _{I_{1}}-\lambda _{S}\right) \right) \right\} ,\medskip
\label{5.1} \\[0.07in]
u_{2}^{\ast } &=&\max \left\{ 0,\min \left( 1,\dfrac{\theta I_{1}\left(
\lambda _{I_{1}}-\lambda _{I_{2}}\right) }{2b_{2}}\right) \right\} ,\medskip
\label{5.2} \\
u_{3}^{\ast } &=&\max \left\{ 0,\min \left( 1,\dfrac{\pi I_{2}\left( \lambda
_{I_{2}}-\lambda _{A}\right) }{2b_{3}}\right) \right\} .\medskip  \label{5.3}
\end{eqnarray}
\end{theorem}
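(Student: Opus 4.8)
The plan is to invoke Pontryagin's Maximum Principle, which reduces the constrained minimization of $J$ to the pointwise minimization of the Hamiltonian $H$ displayed in equation (\ref{5}). Since the existence of an optimal control has already been secured by the preceding lemma, I take the optimal tuple $(u_1^\ast,u_2^\ast,u_3^\ast)$ together with its associated optimal state trajectory as given, and then construct the co-state variables by solving the adjoint system $\dot\lambda_X=-\partial H/\partial X$ for each state $X\in\{S,I_1,I_2,A\}$. Because the objective functional (\ref{4.2}) carries no terminal payoff term, the transversality conditions are automatically the homogeneous ones $\lambda_S(T)=\lambda_{I_1}(T)=\lambda_{I_2}(T)=\lambda_A(T)=0$, which yields (\ref{5.02}) immediately.

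The first substantive computation is the derivation of the adjoint equations (\ref{5.01}). The only delicate point is that the incidence term $\beta_m S$ depends on all four state variables, both explicitly and through the total population $N=S+I_1+I_2+A$ in the denominator of (\ref{2.0}). Writing $\beta_m S=KS/N$ with $K=(1-u_1)(\beta_1 c_1 I_1+\beta_2 c_2 I_2+\beta_3 c_3 A)$, a quotient-rule differentiation produces the compact identities $\partial(\beta_m S)/\partial S=\beta_m-\beta_m S/N$ and $\partial(\beta_m S)/\partial I_1=((1-u_1)\beta_1 c_1 S-\beta_m S)/N$, with the analogues for $I_2$ and $A$ obtained by replacing $\beta_1 c_1$ with $\beta_2 c_2$ and $\beta_3 c_3$ respectively. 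Substituting these into $-\partial H/\partial X$ and collecting the co-state differences $\lambda_S-\lambda_{I_1}$, $\lambda_{I_2}-\lambda_A$, and so on, reproduces each of the four equations of (\ref{5.01}).

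For the optimality conditions I differentiate $H$ with respect to each control on the interior of $U$ and solve $\partial H/\partial u_i=0$. Since $\beta_m$ is linear in $u_1$ with $\partial\beta_m/\partial u_1=-(\beta_1 c_1 I_1+\beta_2 c_2 I_2+\beta_3 c_3 A)/N$, the stationarity condition $2b_1 u_1+\frac{(\beta_1 c_1 I_1+\beta_2 c_2 I_2+\beta_3 c_3 A)S}{N}(\lambda_S-\lambda_{I_1})=0$ gives the unconstrained minimizer; similarly $\partial H/\partial u_2=2b_2 u_2+\theta I_1(\lambda_{I_2}-\lambda_{I_1})$ and $\partial H/\partial u_3=2b_3 u_3+\pi I_2(\lambda_A-\lambda_{I_2})$ fix the minimizers for $u_2$ and $u_3$. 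Because each control is confined to $[0,1]$ and $H$ is strictly convex in each $u_i$ (the weights $b_i$ being positive), the constrained minimizer is the projection of the unconstrained one onto $[0,1]$, which is exactly the $\max\{0,\min(1,\cdot)\}$ clipping recorded in (\ref{5.1})--(\ref{5.3}).

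I expect the main obstacle to be bookkeeping rather than conceptual: correctly carrying the $N$-dependence of $\beta_m$ through the quotient rule and then cancelling terms so that the adjoint equations collapse into the clean form (\ref{5.01}). Once the identities for $\partial(\beta_m S)/\partial X$ are handled with care, the sign tracking in the co-state differences and the projection argument for the controls are routine.
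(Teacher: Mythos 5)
Your proposal is correct and follows exactly the route the paper takes (and in fact supplies more detail than the paper, which simply asserts that the adjoint system, transversality conditions, and control characterizations follow from Pontryagin's maximum principle applied to the Hamiltonian (\ref{5})). Your quotient-rule identities for $\partial(\beta_m S)/\partial X$, the homogeneous terminal conditions from the absence of a salvage term, and the projection of the interior stationary points onto $[0,1]$ all reproduce (\ref{5.01})--(\ref{5.3}) as stated.
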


\section{Numerical Simulations}

\subsection{Stability analysis simulation}

We first consider the case $R_{0}<1$ using the parameter values listed in
Table \ref{Table1}. The intial state variables are chosen as $S(0)=800$, $%
I_{1}(0)=40$, $I_{2}(0)=45$ and $A(0)=0$. The dynamics of the model are
presented in Figure \ref{fig 1}. The figure shows that the population
approaches the disease free equilibrium $E_{0}=(10000,0,0,0)$. The DFE is
clearly locally asymptotically stable whenever $R_{0}<1$. This numerical
verification supports the result stated in Theorem \ref{theo 3} regarding
the stability of the DFE.\newline
Next, we consider the case $R_{0}>1$ using the parameter values listed in
Table \ref{Table2}. The same initial conditions are used. The resulting
dynamics of the model are depicted in Figures \ref{fig2} and \ref{fig3} for
different values of the control parameter. These figures show that the
population tends to the endemic equilibrium $E_{1}$ when $R_{0}>1$. The
endemic equilibrium $E_{1}$ is locally asymptotically stable whenever $%
R_{0}>1$, which supports the analytical results preseted earlier in Theorem %
\ref{theo 5}.
\begin{table}[t]
\caption{Parameter values for the stability simulation in the case $R_{0}<1$.
}\centering%
\begin{tabular}{|l|l|l|l|l|l|l|l|l|l|l|l|l|}
\hline
\text{ Parameter } & $Q_{0}$ & $\beta _{1}$ & $\beta _{2}$ & $\beta _{3}$ & $%
c_{1}$ & $c_{2}$ & $c_{3}$ & $\theta $ & $\pi $ & $\delta $ & $\alpha $ & $%
\mu $ \\ \hline
Values & $2000$ & $0.20$ & $0.15$ & $0.12$ & $1$ & $1$ & $1$ & $0.015$ & $0.6
$ & $0.1$ & $1$ & $0.2$ \\ \hline
Reference & \cite{ripathi2007} & \cite{K.O.Okosun2013} & \cite{ripathi2007} & \cite{K.O.Okosun2013} & - & - & \cite{R.Safie2012} & \cite{ripathi2007} & \cite{K.O.Okosun2013}
 & \cite{ripathi2007} & \cite{ripathi2007} & - \\ \hline
\end{tabular}%
\label{Table1}
\end{table}

\begin{figure}[h]
\centering\includegraphics[width=\textwidth]{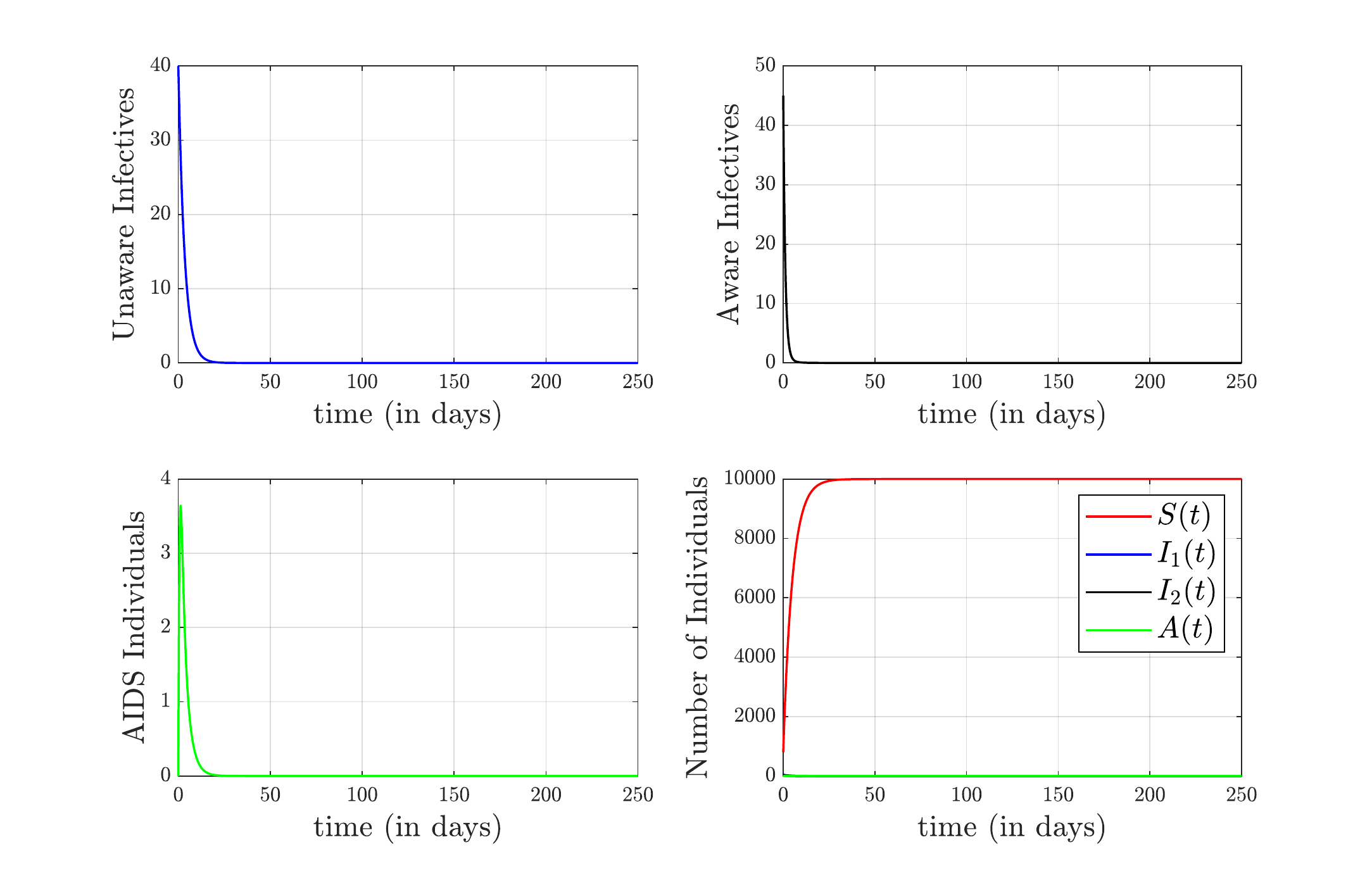}
\caption{Dynamics of system (\protect\ref{2.0}) for control values in the
range $0\leq u_{1}\leq 1$ with the parameters listed in Table \protect\ref%
{Table1}. In each case, the positive equilibrium solution is found to be $%
E_{0}=(10000,0,0,0)$.}
\label{fig 1}
\end{figure}
\begin{table}[t]
\caption{Parameter values for the stability simulation in the case $R_{0}>1$.
}\centering%
\begin{tabular}{|l|l|l|l|l|l|l|l|l|l|l|l|l|}
\hline
\text{ Parameter } & $Q_{0}$ & $\beta _{1}$ & $\beta _{2}$ & $\beta _{3}$ & $%
c_{1}$ & $c_{2}$ & $c_{3}$ & $\theta $ & $\pi $ & $\delta $ & $\alpha $ & $%
\mu $ \\ \hline
Values & \multicolumn{1}{|c|}{$2000$} & \multicolumn{1}{|c|}{$1.344$} &
\multicolumn{1}{|c|}{$0.15$} & \multicolumn{1}{|c|}{$0.12$} &
\multicolumn{1}{|c|}{$3$} & \multicolumn{1}{|c|}{$2$} & \multicolumn{1}{|c|}{%
$1$} & \multicolumn{1}{|c|}{$0.015$} & \multicolumn{1}{|c|}{$0.6$} &
\multicolumn{1}{|c|}{$0.1$} & \multicolumn{1}{|c|}{$1$} &
\multicolumn{1}{|c|}{$0.02$} \\ \hline
Reference & \multicolumn{1}{|c|}{\cite{ripathi2007}} & \multicolumn{1}{|c|}{\cite{ripathi2007}} &
\multicolumn{1}{|c|}{\cite{ripathi2007}} & \multicolumn{1}{|c|}{\cite{K.O.Okosun2013}} &
\multicolumn{1}{|c|}{\cite{R.Safie2012}} & \multicolumn{1}{|c|}{\cite{R.Safie2012}} & \multicolumn{1}{|c|}{\cite{R.Safie2012}} & \multicolumn{1}{|c|}{\cite{ripathi2007}} & \multicolumn{1}{|c|}{\cite{K.O.Okosun2013}} &
\multicolumn{1}{|c|}{\cite{ripathi2007}} & \multicolumn{1}{|c|}{\cite{ripathi2007}} &
\multicolumn{1}{|c|}{\cite{ripathi2007}} \\ \hline
\end{tabular}%
\label{Table2}
\end{table}


\begin{figure}[h]
\centering\includegraphics[width=\textwidth]{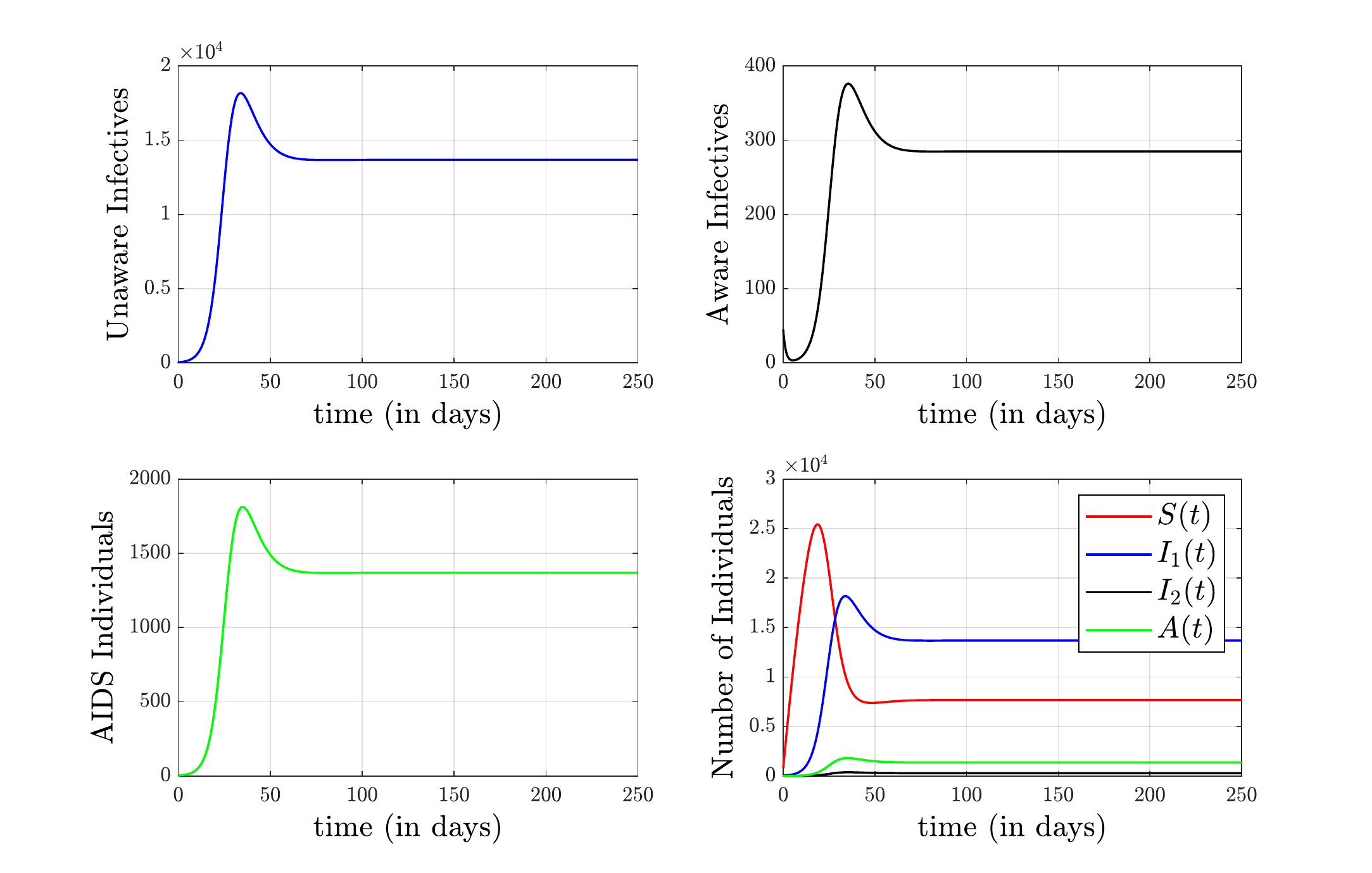}
\caption{Dynamics of system (\protect\ref{2.0}) for $u_{1}=0.9$ with the
parameters listed in Table \protect\ref{Table2}. In each case, $%
R_{0}=3.0013>1$ and the positive equilibrium solution is $%
E_{1}=(7665.8,13679,284.9820,13690)$.}
\label{fig2}
\end{figure}

\begin{figure}[h]
\centering\includegraphics[width=\textwidth]{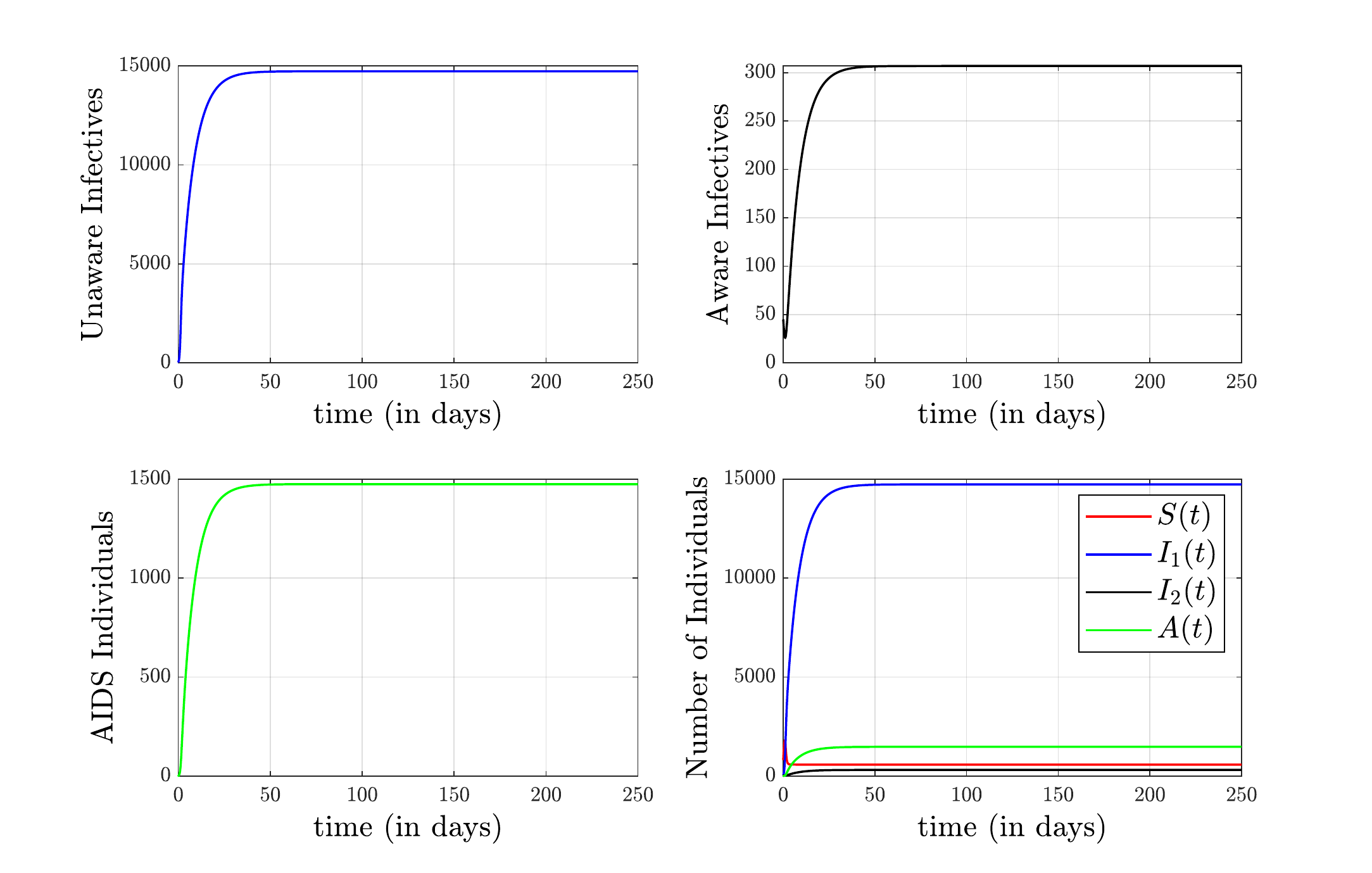}
\caption{Dynamics of system (\protect\ref{2.0}) for $u_{1}=0$ with the
parameters listed in Table \protect\ref{Table2}. In each case, $%
R_{0}=30.0128>1$ and the positive equilibrium solution is $%
E_{1}=(569.3279,14730,306.8848,1474.3)$.}
\label{fig3}
\end{figure}

\subsection{Optimal control simulation}

In this subsection, we perform some numerical simulations of the controlled
system (\ref{4}) with and without optimal control. The optimal control
solution is obtained by solving the optimality system, which consists of the
state system along with the adjoint system. An iterative scheme is used for
solving the optimality system. We start by solving the state equations with
an initial guess for the controls over the simulated time using the forward
fourth-order Runge-Kutta scheme. Because of the transversality conditions (%
\ref{5.02}), the adjoint equations are solved by the backward fourth-order
Runge-Kutta scheme using the current iteration's solution of the state
equations. Then, the controls are updated by using a convex combination of
the previous controls and the value from the characterizations (\ref{5.1}).
This process is reiterated and the iteration is ended if the current states,
the adjoint states, and the control values all converge sufficiently \cite{S. Lenhar2007}. Next, we investigate numerically the effect of the two
optimal control strategies on the spread of the disease in a population. The
first strategy employs all three control interventions $(u_{1},u_{2},u_{3})$%
, whereas the second employs only the two latter controls $(u_{2},u_{3})$.
For the simulation purpose, we chose the weight factors: $a=800$, $b_{1}=35$%
, $b_{2}=55$ and $b_{3}=75$ along with the parameter values from Table \ref%
{Table1}. The intial state variables are chosen as $\left( S\left( 0\right)
,I_{1}\left( 0\right) ,I_{2}\left( 0\right) ,A\left( 0\right) \right)
=\left( 800,40,45,0\right) $.

\textbf{Strategy A: }In this strategy, we used all three control measures $%
(u_{1},u_{2},u_{3})$ to minimize the objective function $J$. In Figure \ref%
{fig4}, we observe that the control strategy results in a significant
reduction in the number of infectives. Figure \ref{fig 5} shows the control
profile, in which the first control $u_{1}$ remains constant from day 1
untill the end of the 100 day test period while controls $u_{2}$ and $u_{3}$
drop gradually from the upperbound after 50 days and 72 days, respectively.
\begin{figure}[h]
\centering\includegraphics[width=\textwidth]{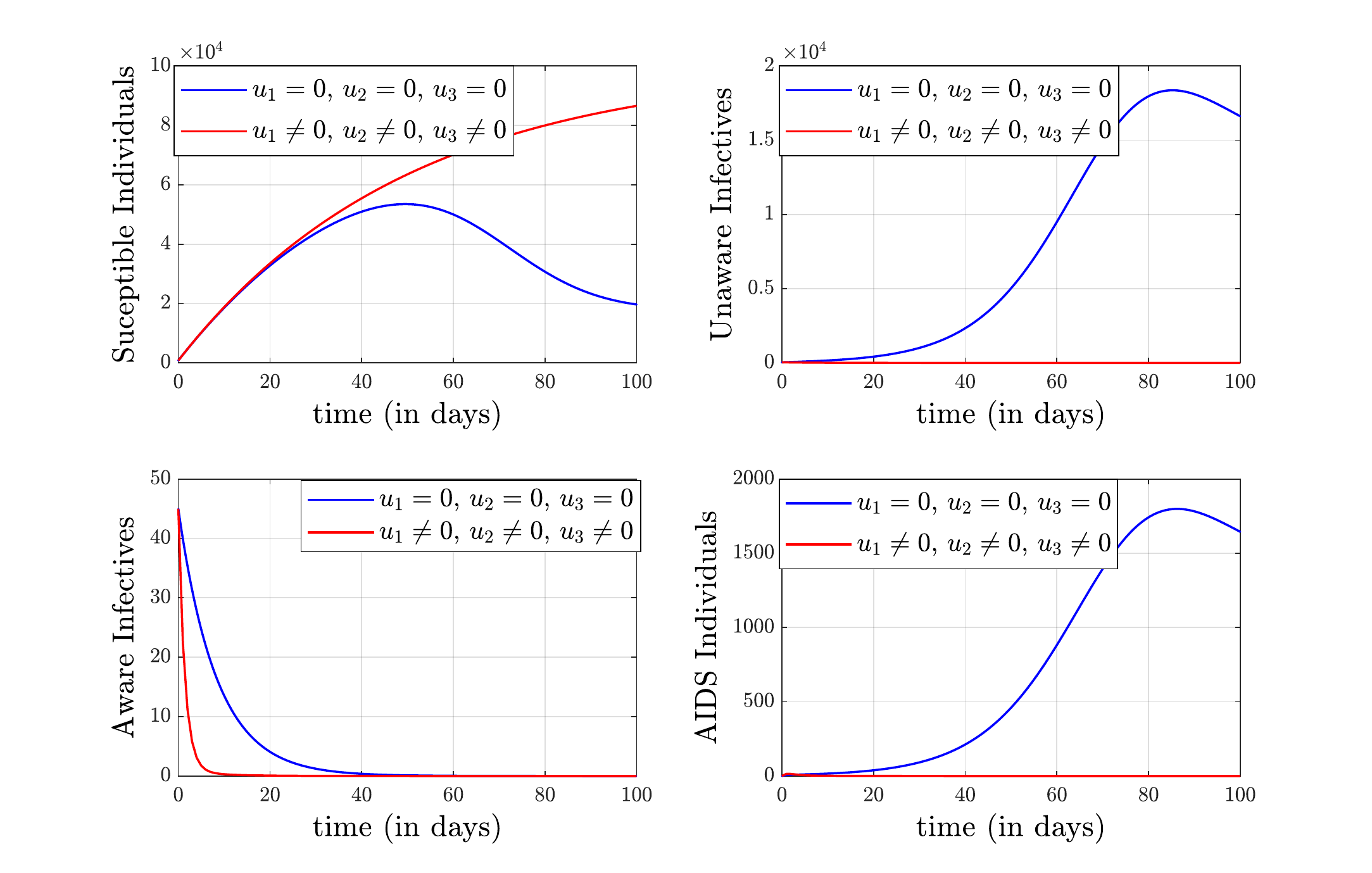}
\caption{Simulations of the HIV AIDs model showing the effects of control
strategy A on the disease spread.}
\label{fig4}
\end{figure}
\begin{figure}[h]
\centering\includegraphics[width=0.6\textwidth]{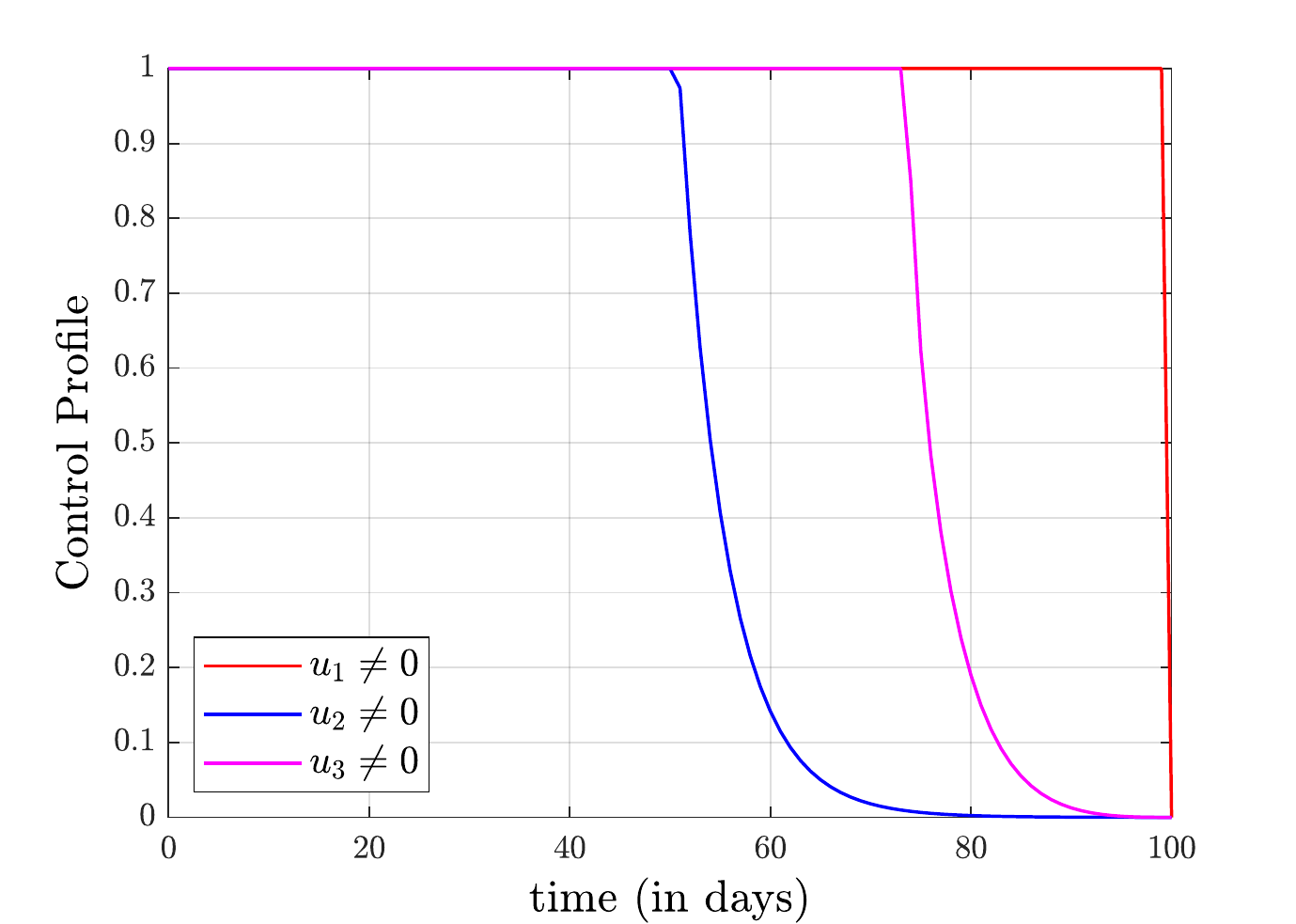}
\caption{The control profile corresponding to strategy A.}
\label{fig 5}
\end{figure}

\textbf{Strategy B: } In this strategy, we optimized the objective function $%
J$ using only the screening control $u_{2}$ and the treatment control $u_{3}$%
, while the condom use control $u_{1}$ was fixed at zero. As depicted in
Figure \ref{fig 6}, we observe an increase in the numbers of unaware  and
aware infectives as well as the Aids population when compared to the
uncontrolled case. This increase was expected in the absence of condom use,
which is considered the main measure that reduces the spread of infection.
In Figure \ref{fig 7}, the control profile shows that $u_{2}$ and $u_{3}$
remain at the upper bound throughout the test period. These results confirm
that this strategy is not efficient at limiting the spread of this disease.
\begin{figure}[h]
\centering
\includegraphics[width=\textwidth]{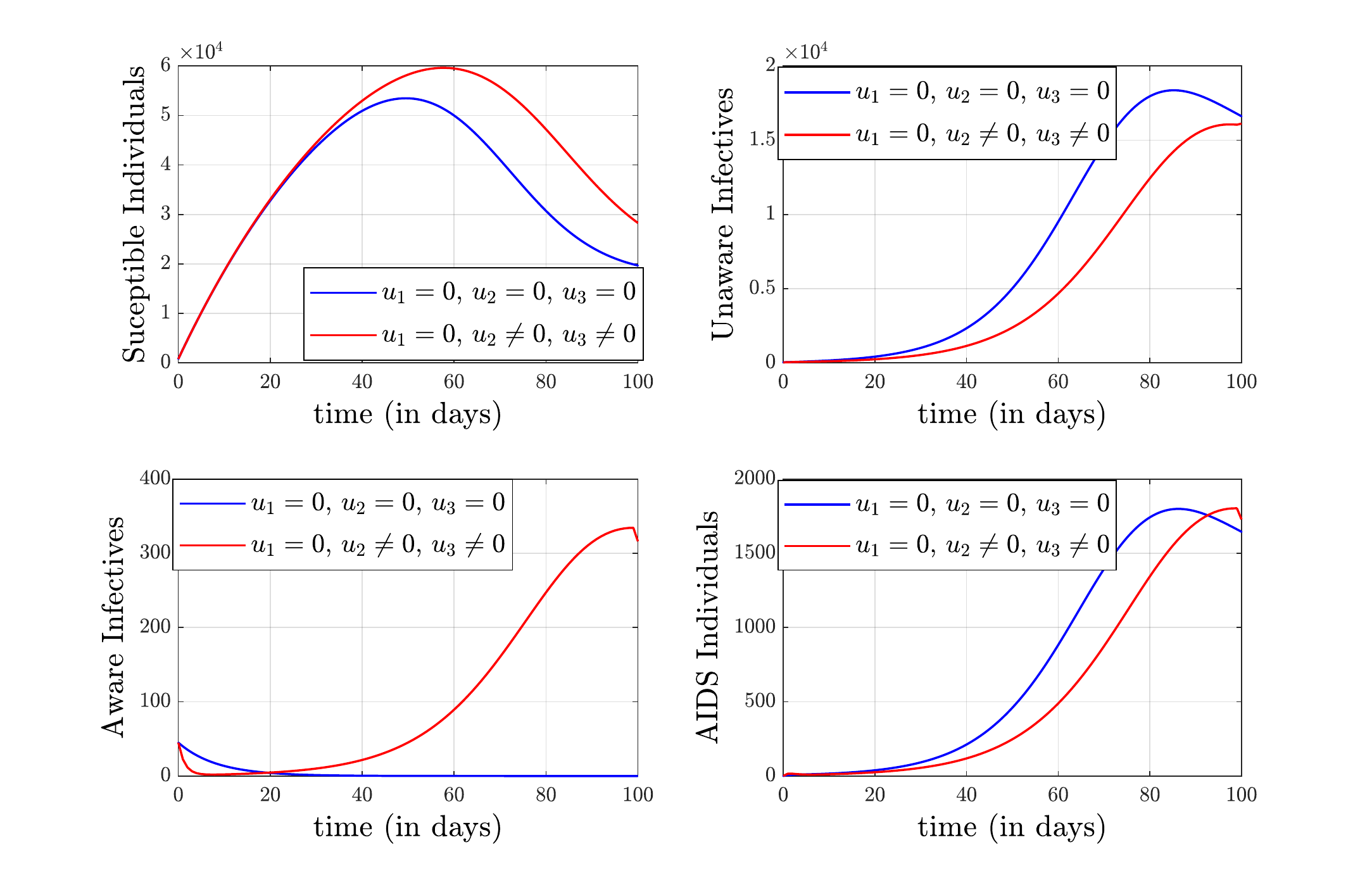}
\caption{Simulations of the HIV/AIDs model showing the effects of control
strategy B on the disease spread .}
\label{fig 6}
\end{figure}
\begin{figure}[h]
\centering\includegraphics[width=0.6\textwidth]{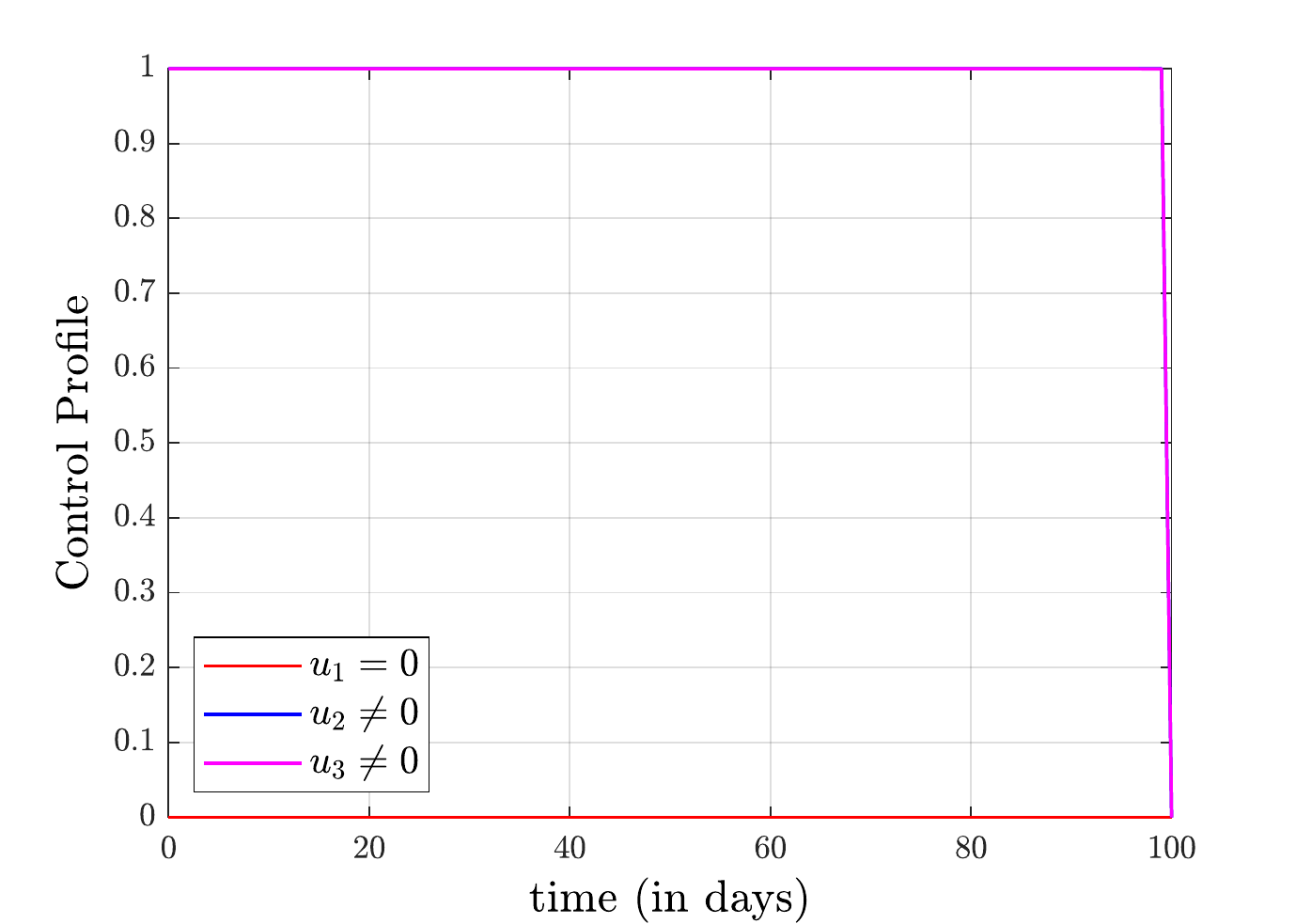}
\caption{The control profile corresponding to strategy B.}
\label{fig 7}
\end{figure}

\section{Conclusion}

In this paper, we studied the HIV/AIDS model proposed in \cite{K.O.Okosun2013}.
The stability of the model's equilibria was investigated using common
stability theory. The model was modified by including three control
measures: the use of condoms and the screening and treatment of infective
individuals. The aim of the control strategy was to achieve the optimal
effect for these measures in two respects: minimizing the sizes of the HIV
and AIDS populations, and realizing it at the lowest possible cost. We
established the existence of an optimal control by means of standard theory
and identified the characteristics of the control parameters through state
and adjoint functions. Simulation results were presented to confirm the
analytical findings. Results confirmed the importance of condom use as a
limiting measure for the spread of the disease.

\end{document}